\numberwithin{equation}{section}
\numberwithin{figure}{section}
  \theoremstyle{remark}
  \newtheorem*{rem*}{Remark}
 \theoremstyle{definition}
 \newtheorem*{defn*}{Definition}
  \theoremstyle{plain}
  \newtheorem*{thm*}{Theorem}
  \theoremstyle{remark}
  \newtheorem*{acknowledgement*}{Acknowledgement}
\theoremstyle{plain}
\newtheorem{thm}{Theorem}
  \theoremstyle{definition}
  \newtheorem{defn}[thm]{Definition}
  \theoremstyle{plain}
  \newtheorem{prop}[thm]{Proposition}
  \theoremstyle{remark}
 \theoremstyle{Lemma}
\newtheorem{Lemma}[thm]{Lemma}
\begin{document}

\title{Some geometric equivariant cohomology theories}

\author{Haggai Tene}
\begin{abstract}
In this paper we give a geometric construction of the Borel equivariant (co)homology for spaces with a $G$-action, where $G$ is a compact Lie group with the property that the adjoint representation is orientable. A nice feature of these constructions is that there are corresponding Poincar\'e dual (co)homology theories called backwards (co)homology. This gives rise to a third relative (co)homology theory which we call stratifold Tate (co)homology. These Tate groups agree with the original
definition of Tate cohomology for finite groups given by Swan.
All constructions in this paper are geometric and use stratifolds.
One advantage of this description is that elements in these groups
can be described concretely by representatives. We give some 
examples
of that.
\end{abstract}
\maketitle

\section{Introduction}

\begin{rem*}
{\em In this paper we denote by $G$ a compact Lie group with the property that the adjoint representation is orientable. All groups will be assumed to have this property unless stated otherwise.}
\end{rem*}

In this paper we give a geometric construction of an equivariant (co)homology theory called {\bf equivariant stratifold (co)homology} which under certain conditions is naturally isomorphic  to singular (co)homology of the Borel construction with integral coefficients. Due to the geometric nature of the construction as bordism groups, the cohomology groups are only defined for smooth oriented manifolds (or smooth, separable, infinite dimensional Hilbert manifolds) with a smooth $G$-action which is assumed to preserve the orientation. We call such manifolds oriented $G$-manifolds (or $G$-Hilbert manifolds).

There are several reasons for being interested in such a geometric description.  The first reason is that it allows new constructions, or sheds light on old constructions. Here are some examples of that: In this paper we use this geometric description to construct another equivariant (co)homology theory which we call {\bf backwards (co)homology}, which has the nice property that it is Poincar\'e dual to  equivariant stratifold (co)homology. Theories with this property (and the later discussed Tate groups) where constructed earlier by Greenlees \cite{Gr} for finite groups and Greenlees and May  \cite{GM} for compact Lie groups and are sometimes called  polar counterparts.  We stress here the geometric flavor. In another paper \cite{T2} we have used the geometric nature to give a description of the product in negative Tate cohomology of finite groups (and a generalization for compact Lie groups). Using this geometric description some interesting vanishing results were obtained.

An example of a new construction based on our geometric approach, which we discuss in this paper, is a simple description of a product in the equivariant cohomology of $G$ with the conjugation action, denoted by $G^{ad}$. This gives a product in the cohomology of $LBG$, the free loop space in $BG$, due to the homotopy equivalence $G^{ad}\times _G EG\to LBG$.
Finally we use the geometry to give an example of an explicit constructions of (co)homology classes and the computation of induced maps and cup products.  

After defining backwards (co)homology, we give a geometric constructions of a natural transformation from backwards cohomology  to  equivariant stratifold cohomology, and a natural transformation from  equivariant stratifold homology to backwards homology. Using these natural transformations, a relative (co)homology theory is defined, which is called {\bf stratifold Tate (co)homology}. All these groups fit into long exact sequences. 

To give a flavor of the constructions we explain the stratifold homology groups of a $G$-space, a space with continuous $G$-action, $X$. The basic geometric input are stratifolds, which are certain stratified spaces. A key property of stratifolds is that they are differential topological objects where most of the basic results from smooth manifolds, like Sard's theorem or transversality, hold. This allows the construction of bordism groups in the same way as with smooth manifolds. 

An equivariant homology class of degree $k$ is given by a compact, regular, oriented $k$ dimensional stratifold $S$ with a free, orientation preserving $G$-action together with an equivariant map to $X$. Details will be explained later. Two such maps represent the same homology class if and only if they are bordant. The resulting bordism groups are the {\bf equivariant stratifold homology groups}
$$
SH_k^G(X).
$$

Similar groups were constructed by MacPherson \cite{Mac} using pseudomanifolds instead of stratifolds.

In case $G$ is trivial, Kreck \cite{Kr} has shown that these bordism groups, with induced maps given by composition, form a homology theory which fulfills the Eilenberg Steenrod axioms and so, for $CW$ complexes, agree with singular homology. Following an idea of Quillen, Kreck \cite{Kr} also  defined  geometric cohomology groups for smooth oriented manifolds and showed that they agree with singular cohomology with integral coefficients. Recently, Kreck and the author \cite{K-T} defined a geometric cohomology theory for Hilbert manifolds using a generalization of stratifolds to the setting of Hilbert manifolds. This plays an important role in this paper since we will use a Hilbert manifold model for the classifying spaces $BG$ and $EG$  to define equivariant versions of this theory. The resulting cohomology theory, called {\bf equivariant stratifold cohomology}, is denoted by 
$$
SH^*_G,$$
and is defined for $G$-Hilbert manifolds. The cohomology classes of a $G$-Hilbert manifold $P$  are represented by equivariant, proper, Fredholm maps from $G$-Hilbert stratifolds $S$ to $P \times EG$, where the degree of the cohomology class is minus the Fredholm index of the restriction of the map to the top stratum of $S$. Details are explained later. The relation to the (co)homology of the Borel construction is given by the following theorem:

\begin{thm}
There is a natural isomorphisms of equivariant homology theories 
$$SH_{k}^{G}(X)\to H_{k-dim(G)}^{G}(X;\mathbb{Z})
$$ 
for $G$-CW complexes $X$ and 
 a natural isomorphism of multiplicative
equivariant cohomology theories  
$$
SH_{G}^{*}(P) \to H_{G}^{*}(P;\mathbb{Z})
$$ 
for $G$-Hilbert manifolds $P$.
\end{thm}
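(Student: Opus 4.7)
The plan is to reduce the equivariant statements to the corresponding non-equivariant results of Kreck and Kreck--Tene by passing to the Borel construction. Given a representative $(S, f \colon S \to X)$ of a class in $SH_k^G(X)$, where $S$ is a free oriented $G$-stratifold of dimension $k$ and $f$ is equivariant, I would choose an equivariant map $\alpha \colon S \to EG$, which exists and is unique up to equivariant homotopy by the classifying property of $EG$. The combined equivariant map $(f, \alpha) \colon S \to X \times EG$ descends to a map $\bar{f} \colon S/G \to X \times_G EG$, and since the $G$-action on $S$ is free, $S/G$ is an oriented stratifold of dimension $k - \dim(G)$. This defines a natural transformation $\Phi \colon SH_k^G(X) \to SH_{k-\dim(G)}(X \times_G EG)$. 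Analogously, for the cohomology side, the quotient of an equivariant proper Fredholm map $S \to P \times EG$ by the free $G$-action gives a proper Fredholm map $S/G \to P \times_G EG$ of the same Fredholm index, defining $\Psi \colon SH_G^*(P) \to SH^*(P \times_G EG)$.

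Next I would construct inverses to $\Phi$ and $\Psi$ via pullback along the principal $G$-bundles $X \times EG \to X \times_G EG$ and $P \times EG \to P \times_G EG$. Given a representative $g \colon T \to X \times_G EG$, the pullback of the principal $G$-bundle gives a free oriented $G$-stratifold $\tilde{T}$ of dimension $\dim(T) + \dim(G)$, equipped with an equivariant map to $X \times EG$ whose projection to $X$ is the required equivariant map. Well-definedness on bordism classes, and the fact that these constructions invert each other, would follow from equivariant transversality for free actions on stratifolds together with the uniqueness of equivariant classifying maps up to equivariant homotopy. This produces natural isomorphisms $SH_k^G(X) \cong SH_{k-\dim(G)}(X \times_G EG)$ in homology and $SH_G^*(P) \cong SH^*(P \times_G EG)$ in cohomology.

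Combining these with Kreck's theorem that stratifold homology agrees with singular homology on CW complexes, and with the Kreck--Tene theorem that stratifold cohomology of a Hilbert manifold is naturally isomorphic as a ring to singular cohomology, applied with $Y = X \times_G EG$ (a CW complex) and $M = P \times_G EG$ (a Hilbert manifold, using a Hilbert manifold model of $EG$ with smooth free $G$-action), delivers the desired isomorphisms. By definition these compute the Borel equivariant (co)homology $H_*^G(X; \mathbb{Z})$ and $H_G^*(P; \mathbb{Z})$.

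The main obstacle is verifying multiplicativity in the cohomology statement. The product in $SH_G^*$ is represented by transverse intersection of equivariant proper Fredholm maps, and one has to argue that the quotient of a transverse intersection of free $G$-equivariant maps is again the transverse intersection of the quotient maps in $P \times_G EG$, so that $\Psi$ is a ring homomorphism compatible with the cup product. A secondary but technically important point is the role of orientations: the assumption that the adjoint representation of $G$ is orientable is precisely what ensures that the quotient $S/G$ of an oriented free $G$-stratifold carries a canonical orientation, so that both $\Phi$ and its inverse are well defined on oriented bordism classes and the whole argument descends to the oriented setting.
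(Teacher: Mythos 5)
Your proposal is correct and follows essentially the same route as the paper: the paper also passes to the Borel construction by choosing an equivariant classifying map $S\to EG$ and dividing by the free action, establishes the bijection on bordism classes via the principal-bundle correspondence for free $G$-stratifolds (including the orientation of the quotient, which is exactly where orientability of the adjoint representation enters), and then invokes Kreck's identification of stratifold homology with singular homology and the Kreck--Tene identification for Hilbert manifolds. The points you flag as remaining work (multiplicativity and orientations) are handled in the paper by the same observations you sketch.
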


Poincar\'e duality between the cohomology and homology groups of the Borel construction of a closed oriented $G$-manifold fails, for example if the manifold is a point. From the point of view of stratifold homology and cohomology for closed oriented smooth manifolds there are almost tautological Poincar\'e dual groups, where both homology and cohomology  are given as bordism theories. The only difference is that homology classes are represented by compact stratifolds and cohomology classes by proper maps from (not necessarily compact) stratifolds. Applying this principle to the equivariant stratifold (co)homology theories constructed above we obtain two new theories, the above mentioned {\bf backwards (co)homology}
$$
DSH^*_G(M),
$$
defined for oriented $G$-manifolds, and 
$$
DSH_*^G(X),
$$
defined for $G$-spaces. For closed oriented $G$-manifolds Poincar\'e duality holds:

\begin{thm}
Let $M$ be a closed oriented $G$-manifold of dimension $m$, then there are isomorphisms:
$$
PD:DSH_{G}^{k}(M)\rightarrow SH_{m-k}^{G}(M),
$$ and
$$
PD:SH_{G}^{k}(M)\rightarrow DSH_{m-k}^{G}(M).
$$ 

\end{thm}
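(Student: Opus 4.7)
The plan is to construct both $PD$ maps at the level of representatives and to observe that they are tautological identifications, following Kreck's non-equivariant argument that, for a closed oriented target, the constraint ``proper map from a stratifold of dimension $m-k$'' coincides with ``compact stratifold of dimension $m-k$''.

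For $PD \colon DSH^k_G(M) \to SH^G_{m-k}(M)$, recall that backwards cohomology is built from $SH^G_*$ by swapping compact representatives for proper maps; thus a class on the left is represented by a proper equivariant map $f \colon S \to M$ from an oriented $(m-k)$-dimensional $G$-stratifold $S$ with a free orientation preserving action. Since $M$ is closed, $S$ is automatically compact, so $(S,f)$ simultaneously represents a class in $SH^G_{m-k}(M)$. I would set $PD[S,f] := [S,f]$; well definedness on bordism classes follows by applying the same tautology to representing bordisms over $M \times [0,1]$, and the inverse, taking a compact representative of $SH^G_{m-k}(M)$ to itself viewed as a (trivially) proper map, is well defined for the same reason. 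The parallel argument for $PD \colon SH^k_G(M) \to DSH^G_{m-k}(M)$ in the Hilbert stratifold setting proceeds identically: a class on the left is a proper equivariant Fredholm map $f \colon S \to M \times EG$ of index $-k$ on the top stratum, and the same data under the compactness/properness swap defining $DSH_*^G$ yields a class of $DSH^G_{m-k}(M)$, with the degree $m-k$ dictated by $\dim M = m$ together with the Fredholm index.

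The main obstacle I anticipate is the orientation and index bookkeeping, especially in the Hilbert setting. One must verify that the orientation of the Poincar\'e dual representative is built consistently from the orientations of $S$ and $M$ and the assumption that $G$ acts preserving both, and that the Fredholm index converts correctly to homological degree via $\dim M$. Once these conventions are fixed coherently, the two $PD$ maps and their inverses fit together to give the claimed isomorphisms without requiring any new geometric input beyond the ``proper equals compact for closed targets'' principle and the standard Kreck--Tene transversality results for Hilbert stratifolds.
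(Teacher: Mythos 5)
Your proposal matches the paper's proof: both $PD$ maps are tautological identifications of representatives, using that properness over a closed manifold forces compactness of the source and that the Fredholm index shifts by $\dim M = m$ when the $M$-factor is split off, with the orientation carried along using the orientability of $M$. The one ingredient you gloss over and the paper makes explicit is equivariant smooth approximation: classes in $SH^{G}_{m-k}(M)$ (and the $M$-component of classes in $DSH^{G}_{m-k}(M)$) are represented by merely \emph{continuous} equivariant maps, so surjectivity and injectivity require approximating these by smooth equivariant maps, which the paper does by passing to orbit spaces of the free actions.
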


For an oriented $G$-manifold there is a natural transformation
$$
DSH^k_G(M) \to SH^k_G(M)
$$
assigning to an equivariant, proper map $f: S \to M$ the map $ f \times \,\, id : S \times EG \to M \times EG$. In such a situation one can construct a third cohomology theory where the cohomology classes are represented by stratifolds $T$ with boundary of the form $S \times EG$ where $S$ is an oriented, free $G$-stratifold, mapping to $M \times EG$ such that the restriction to the boundary is the product of a proper map $S \to M$ with the identity map on $EG$. This cohomology theory is called {\bf stratifold  Tate cohomology}
$$
\widehat {SH}^*_G(M)
$$
and is defined for oriented $G$-manifolds. We similarly construct {\bf stratifold Tate homology} for $G$-spaces
$$
\widehat {SH}_*^G(X).
$$ 
These Tate groups fit into a long exact sequence:

\begin{thm}
For an oriented $G$-manifold $M$ and a $G$-space $X$ there  are long exact sequences
$$
...\to DSH_{G}^{k}(M)\to SH_{G}^{k}(M)\to\widehat{SH}_{G}^{k}(M)\to DSH_{G}^{k+1}(M)\to... ,
$$
and
$$
...\to SH^{G}_{k}(X)\to DSH^{G}_{k}(X)\to\widehat{SH}^{G}_{k}(X)\to SH^{G}_{k-1}(X)\to... .
$$

\end{thm}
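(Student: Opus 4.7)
The plan is to establish both long exact sequences by the standard mapping cone pattern associated to the natural transformations relating the two geometric theories, carried out entirely in the language of stratifold bordism. I will write out the cohomological case in detail; the homological case is completely analogous, with compact stratifolds replacing properness conditions and the roles of $SH_*$ and $DSH_*$ interchanged.

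First I would make precise the bordism relation defining Tate cohomology. A Tate cochain of degree $k$ is a triple $(T,S,F)$ where $f:S\to M$ is a proper equivariant map from an oriented free $G$-stratifold (representing a class of degree $k+1$ in $DSH^{k+1}_G(M)$), $T$ is an equivariant Hilbert stratifold with $\partial T = S\times EG$, and $F:T\to M\times EG$ is an equivariant, proper, Fredholm map of appropriate index restricting to $f\times\mathrm{id}$ on the boundary. A Tate bordism between $(T_0,S_0,F_0)$ and $(T_1,S_1,F_1)$ consists of a backwards bordism $V$ over $M$ between $S_0$ and $S_1$ together with a stratifold with corners $W$ whose boundary decomposes as $T_0\sqcup T_1$ glued along the common $S_i\times EG$ to $V\times EG$, plus a compatible extension $\Phi:W\to M\times EG$. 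With these definitions in hand, the connecting homomorphism $\partial[T,S,F]:=[f:S\to M]$ is immediately well-defined, since a Tate bordism restricts on $V$ to a backwards bordism of the $S$-parts.

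Next I would verify exactness at each of the three terms. At $DSH^k_G(M)$: if $[f:S\to M]$ vanishes in $SH^k_G(M)$, then by definition the map $f\times\mathrm{id}:S\times EG\to M\times EG$ bounds a Fredholm map $F:T\to M\times EG$; the triple $(T,S,F)$ is then a Tate cochain with $\partial[T,S,F]=[S]$, and the composition in the other direction vanishes because $T$ itself is a null-bordism of its boundary $S\times EG$. At $SH^k_G(M)$: a class of the form $[S\times EG\to M\times EG]$ in the image of $DSH^k$ becomes the Tate cochain $(S\times EG,\emptyset,f\times\mathrm{id})$, which is null-bordant via the Tate bordism whose $V$-component is $S\times[0,1]$ capped appropriately. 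Conversely, if an $SH^k$-class $[F:S'\to M\times EG]$ viewed as $(S',\emptyset,F)$ is null-bordant in Tate, the null-bordism supplies a backwards chain $V$ with $[V\times EG]=[F]$ in $SH^k$, placing $[F]$ in the image of $DSH^k$. At $\widehat{SH}^k_G(M)$: if $\partial[T,S,F]=0$, choose a backwards nullbordism $V$ of $S$ over $M$ and form the closed stratifold $T':=T\cup_{S\times EG}(V\times EG)$; then $T'$ defines an $SH^k$-class whose image in $\widehat{SH}^k$ equals $[T,S,F]$ via the tautological Tate bordism supplied by $V\times EG$. Conversely, classes coming from $SH^k$ are represented by $(T,\emptyset,F)$ and so have zero boundary.

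The main technical obstacle will be organizing the stratifold-with-corners structure needed to define Tate bordisms and to perform the cap-off and gluing constructions used in exactness. This requires suitable collar neighborhood theorems for (Hilbert) stratifolds and a coherent treatment of corners, but these are available from Kreck's foundational work together with the Hilbert manifold extension of \cite{K-T}, so the argument reduces to careful bookkeeping of boundaries, corners, and Fredholm indices. The homological long exact sequence is obtained by the dual construction: one represents $\widehat{SH}^G_k$-classes by compact free $G$-stratifolds $T$ with $\partial T = S\times EG$ mapping to $X\times EG$, defines the connecting map by sending $[T,S]$ to $[S\to X]\in SH^G_{k-1}(X)$ (the degree shift reflecting that a boundary lowers dimension), and verifies exactness by the same three-step bordism argument.
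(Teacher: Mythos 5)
Your proposal follows essentially the same route as the paper: the paper defines $\widehat{SH}^k_G$ by exactly this relative-bordism recipe and then simply asserts that the long exact sequence ``is apparent from the bordism relation,'' so your three-step verification of exactness is a fleshed-out version of that one-line argument, and your corner-bordism relation is equivalent to the paper's (pass between them by taking $V=B\sqcup(-B')$, absorbing the closed piece $B'$ into the bordism of boundary parts). The only slip is in your sketch of the homological case, where the Tate cycles $T$ must be singular Hilbert stratifolds mapping to $X\times EG$ with boundary $S\times EG$ for $S$ a compact free $G$-stratifold --- $T$ itself cannot be a compact finite-dimensional stratifold since its boundary carries an $EG$ factor --- but this does not affect the structure of the argument.
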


This implies that for a closed oriented $G$-manifold $M$, the groups $\widehat{SH}_{G}^{k}(M)$
are the obstructions to Poincar\'e duality in the various dimensions.

There is another reason why the Tate groups are interesting. They are also an obstruction for orientation preserving free actions on a smooth manifold:

\begin{thm}
Let $M$ be an oriented $G$-manifold then $\widehat{SH}_{G}^{k}(M)$ vanishes for all $k\in \mathbb Z$
if and only if $G$ acts freely on $M$.
\end{thm}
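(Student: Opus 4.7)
By Theorem 3, the Tate groups $\widehat{SH}_G^k(M)$ vanish in all degrees if and only if the natural transformation $\alpha: DSH_G^k(M) \to SH_G^k(M)$, sending $[f: S \to M]$ to $[f \times \mathrm{id}: S \times EG \to M \times EG]$, is an isomorphism for every $k$. My plan is therefore to prove that $\alpha$ is an isomorphism exactly when $G$ acts freely on $M$.

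Suppose first that $G$ acts freely on $M$. Since $M$ is a free $G$-space and $EG$ is equivariantly contractible, there is an equivariant map $\phi: M \to EG$, unique up to equivariant homotopy, and $s := (\mathrm{id},\phi): M \hookrightarrow M \times EG$ is a $G$-equivariant closed embedding which sections $\mathrm{pr}_M$. I construct an inverse $\beta: SH_G^k(M) \to DSH_G^k(M)$ by sending a representative $f: S \to M \times EG$ (after an equivariant deformation to make $f$ transverse to $s(M)$, which is possible by the stratifold transversality of Kreck and of Kreck--Tene) to the proper equivariant map $\mathrm{pr}_M \circ f|_{f^{-1}(s(M))}: f^{-1}(s(M)) \to M$. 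The preimage is a finite-dimensional free $G$-stratifold of the correct dimension (determined by the Fredholm index of $f$), and properness follows from properness of $f$ and closedness of $s(M)$. The compositions $\alpha\beta$ and $\beta\alpha$ are both the identity, because any equivariant map into $EG$ is equivariantly homotopic to $\phi \circ \mathrm{pr}_M$, and interpolating supplies the required equivariant bordisms.

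Now assume $G$ does not act freely, so some point $p \in M$ has nontrivial stabilizer $H$. The orbit $Gp \cong G/H$ is a closed $G$-invariant submanifold of $M$, and the product embedding $Gp \times EG \hookrightarrow M \times EG$ is equivariant, proper, Fredholm, with $G$ acting freely, hence represents a class $c \in SH_G^*(M)$. Under the isomorphism of Theorem 1 with the cohomology of the Borel construction $M \times_G EG$, the class $c$ corresponds to the pushforward of the fundamental class of $BH = Gp \times_G EG$ along the inclusion $BH \hookrightarrow M \times_G EG$. If $c$ were equal to $\alpha([g: \widetilde{S} \to M])$ for some free $G$-stratifold $\widetilde{S}$ mapping properly and equivariantly to $M$, then passing to $G$-quotients the resulting class would factor through the finite-dimensional stratifold $\widetilde{S}/G \to M \times_G EG$; however, the class at $BH$ cannot so factor, as this is precisely the obstruction recorded by the non-vanishing Tate cohomology $\widehat{H}^*(H;\mathbb{Z})$ of the nontrivial compact Lie group $H$ (with orientable adjoint, by the standing convention).

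The main obstacle will be the necessity direction. The sufficiency is a routine equivariant transversality argument aided by the contractibility of $EG$, and its technical points (properness, freeness, orientation, and Fredholm dimension) are all inherited from the general stratifold framework. The harder step is to produce a concrete nontrivial Tate class from a non-free orbit and to recognise its non-triviality, which hinges on carefully identifying the orbit-type invariant with the known nonvanishing Tate cohomology of the stabilizer subgroup.
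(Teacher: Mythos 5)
Your reduction via Theorem 3 to the question of when $\Theta:DSH^k_G(M)\to SH^k_G(M)$ is an isomorphism is the right starting point, and your treatment of the free case, while more hands-on than the paper's (which simply identifies both sides with $H^*(M/G;\mathbb{Z})$ via Proposition 14 and Theorem 1), is a plausible alternative: pulling a singular Hilbert stratifold back along the graph section $(\mathrm{id},\phi):M\to M\times EG$ is essentially the same mechanism that underlies Proposition 14, and the dimension count works because a Fredholm map already has finite-codimensional image at each point.

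The necessity direction, however, has a genuine gap. Everything rests on the claim that your class $c=[Gp\times EG\hookrightarrow M\times EG]$ is not in the image of $\Theta$, and the justification offered --- that a class ``factoring through a finite-dimensional stratifold'' over $BH$ is obstructed by ``precisely'' the non-vanishing Tate cohomology of $H$ --- asserts the content of the theorem rather than proving it; nothing available in this framework identifies such non-factorability with $\widehat{H}^*(H;\mathbb{Z})$, and for positive-dimensional $H$ no such identification is even on the table. There are also two concrete obstacles you do not address: the orbit $G/H$ need not be orientable and the action on it need not be orientation preserving, so $c$ may fail to be a legitimate representative; and a closed subgroup $H$ of $G$ need not inherit the orientable-adjoint hypothesis (consider $Pin(2)\subset SU(2)$), so ``by the standing convention'' does not apply to $H$. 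The paper's route avoids all of this: it chooses a nontrivial \emph{finite} subgroup $H$ of a stabilizer, uses the equivariant map $G/H\to M$ and functoriality to reduce to showing that $\widehat{SH}^k_G(pt)\to\widehat{SH}^k_G(G/H)$ is nonzero for some $k$, observes that $DSH^k_G$ of both $pt$ and $G/H$ vanishes for all sufficiently large $k$ so that the long exact sequence of Theorem 3 turns this into the question of whether $SH^k_G(pt)\to SH^k_G(G/H)$, i.e.\ $H^k(BG)\to H^k(BH)$, is nonzero, and then invokes Swan's theorem that this restriction is nontrivial for infinitely many $k$. That last input --- Swan's nontriviality theorem for finite subgroups --- is the essential fact driving this direction of the proof, and it is absent from your argument; without it (or a substitute) the non-surjectivity of $\Theta$ at $c$ is unproved.
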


\begin{rem*} {\em Greenlees \cite{Gr} for finite groups and Greenlees and May  \cite{GM} for compact Lie groups defined equivariant (co)homology theories which are related to the (co)homology theories of the Borel construction in a similar way as backwards and stratifold Tate cohomology. They also fit into an exact sequence as in Theorem 3. They use equivariant spectra to carry this out. It is expected that our theories are isomorphic to their theories for groups with the property that the adjoint representation is orientable but it is not so easy to build a bridge from theories constructed using spectra to theories constructed using stratifolds. Even for the trivial group Kreck has shown that stratifold (co)homology agrees with singular (co)homology rather indirectly through a characterization of the theories by axioms \cite{Kr}. It is desirable and a very interesting question to identify the equivariant (co)homology theories defined in this paper with those defined by Greenlees and May. At least for finite groups the Tate cohomology groups agree, since both groups are isomorphic to the original Tate cohomology groups defined by Swan. This was proved by Greenlees in \cite{Gr} for his groups and for the group presented here in \cite{Te}:
\begin{thm}
Let $G$ be a fixed finite group. On the category of oriented $G$-manifolds
there is an isomorphism $\widehat{SH}_{G}^{k}(M)\to\widehat{H}_{G}^{k}(M)$
of groups for all $M$ and $k$. 
\end{thm}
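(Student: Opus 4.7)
The plan is to compare both Tate theories via Swan's original algebraic description and then apply the five lemma. Swan's Tate cohomology of $M$ fits into a long exact sequence of the form
$$
\cdots \to H^G_{-k-1}(M;\mathbb{Z}) \xrightarrow{N} H^k_G(M;\mathbb{Z}) \to \widehat{H}^k_G(M) \to H^G_{-k-2}(M;\mathbb{Z}) \to \cdots,
$$
where $N$ is the algebraic norm map obtained from a complete resolution of $\mathbb{Z}$ over $\mathbb{Z}[G]$. On the other side, the long exact sequence of Theorem 3 connects $\widehat{SH}^*_G(M)$ to $DSH^*_G(M)$ and $SH^*_G(M)$, and I would set up a term-by-term comparison with Swan's sequence.

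The outside terms are identified as follows. By Theorem 1 one has $SH^*_G(M)\cong H^*_G(M;\mathbb{Z})$, and since $\dim G=0$ for finite $G$, Theorem 1 also gives $SH^G_*(X)\cong H^G_*(X;\mathbb{Z})$. For a closed oriented $G$-manifold, Theorem 2 then yields $DSH^k_G(M)\cong H^G_{m-k}(M;\mathbb{Z})$; for general oriented $G$-manifolds the analogous identification with Borel--Moore homology extends directly from the definition of $DSH^*_G$ via proper maps. The next and crucial step is to verify that the natural transformation $DSH^k_G(M)\to SH^k_G(M)$ from Theorem 3, sending a proper equivariant $f\colon S\to M$ to $f\times\mathrm{id}_{EG}\colon S\times EG\to M\times EG$, corresponds to Swan's norm $N$ under these identifications. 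The geometric content is transparent: crossing with a free contractible $G$-space records, after passing to quotients, a sum over the $|G|$ sheets of the covering, which is exactly the combinatorial effect of $N=\sum_{g\in G} g$ on equivariant chains. Once this compatibility is in place, the five lemma produces a natural isomorphism $\widehat{SH}^k_G(M)\to\widehat{H}^k_G(M)$.

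The main obstacle is precisely this identification of the geometric cross-with-$EG$ map with the algebraic norm, since the two maps live in quite different worlds. A reasonable strategy is to fix a concrete cellular (or Hilbert-manifold skeletal) model for $EG$ so that ``$\times EG$'' becomes an explicit $|G|$-sheeted covering construction, lift a representative stratifold cycle through both theories, and check on the level of (co)chains that the two induced maps agree up to a natural chain homotopy. Naturality in $M$, which is required both for this verification and to pass from the closed case covered by Theorem 2 to arbitrary oriented $G$-manifolds, then follows automatically once the five-lemma isomorphism has been assembled from natural isomorphisms on the outside terms.
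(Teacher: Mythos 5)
Note first that the paper does not actually prove this theorem: it explicitly declines to ``repeat the rather technical proof'' and refers to the thesis \cite{Te}, so there is no in-text argument to compare yours against; I can only assess your proposal on its own terms. Its overall shape --- match the two norm sequences and identify the geometric map $\Theta$ with the algebraic norm --- is a reasonable one, but as written it has a fatal logical gap: the five lemma does not \emph{produce} the middle map. Given two long exact sequences with compatible isomorphisms on the outer four terms, all you learn is that $\widehat{SH}^k_G(M)$ and $\widehat{H}^k_G(M)$ are both extensions of $\ker$ by $\operatorname{coker}$ of the same norm map; two such extensions need not be isomorphic, and even when they are there is no canonical or natural isomorphism. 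The actual content of the theorem is the construction of a homomorphism $\widehat{SH}^k_G(M)\to\widehat{H}^k_G(M)$ out of a representative $(T,S\times EG)\to M\times EG$, e.g.\ a cocycle-level assignment with respect to a complete resolution of $\mathbb{Z}$ over $\mathbb{Z}[G]$ that is compatible with the two boundary maps of Definition 15. That construction is precisely the ``rather technical'' part the paper outsources to \cite{Te}, and your proposal never supplies it; without it the five lemma has nothing to apply to.

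There are also concrete errors in the comparison you do set up. The grading of your Swan sequence is wrong: already for $M=pt$ and $k=0$ your sequence reads $0=H^G_{-1}(pt)\to H^0(BG)\to\widehat{H}^0_G(pt)\to H^G_{-2}(pt)=0$, forcing $\widehat{H}^0_G(pt)\cong\mathbb{Z}$ instead of $\mathbb{Z}/|G|$; the correct sequence for a point is $\cdots\to H_{-k}(BG)\to H^k(BG)\to\widehat{H}^k\to H_{-k-1}(BG)\to\cdots$, and for a closed oriented $m$-manifold the term matching $DSH^k_G(M)\cong SH^G_{m-k}(M)$ is $H^G_{m-k}(M)$, with a dimension shift you have dropped. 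For non-compact $M$ the relevant term is a locally finite (Borel--Moore) variant, and identifying \emph{that} both with $DSH^k_G(M)$ and with the negative-degree part of Swan's complex is itself a nontrivial step, not something that ``extends directly.'' Finally, the identification of $\Theta$ with the norm map is only asserted heuristically; it is true in the basic case ($\Theta$ on $DSH^0_G(pt)\to SH^0_G(pt)$ sends the point class to the class of the $|G|$-fold covering $EG\to BG$, i.e.\ multiplication by $|G|$), but a proof for all $M$ and $k$ requires the same chain-level work as the missing comparison map, so this cannot be treated as a separate, lighter verification.
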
 
We will not repeat the rather technical proof of this Theorem and refer to \cite{Te} instead. Once we are able to identify the Tate groups defined in this paper with those defined by Greenlees and Greenlees and May this will be a consequence of Greenlees' identification of his theory with Swan's theory.} 
\end{rem*}

One can also define analogous theories which correspond to equivariant cohomology groups with $\mathbb Z/2$-coefficients. For this we need to use stratifolds such that the top stratum is not oriented. In that case, one obtains {\bf equivariant (co)homology theories with $\mathbb {Z}/2$ coefficients}
$$SH^*_G(M;\mathbb{Z}/2), DSH^*_G(M;\mathbb{Z}/2), \widehat{SH}^*_G(M;\mathbb{Z}/2), $$

\noindent defined for $G$-manifolds $M$, and

$$SH_*^G(X;\mathbb{Z}/2), DSH_*^G(X;\mathbb{Z}/2), \widehat{SH}_*^G(X;\mathbb{Z}/2), $$

\noindent defined for $G$-spaces $X$. 

\begin{thm}
Theorems 1-3 hold for the equivariant (co)homology theories with $\mathbb {Z}/2$ coefficients. 
\end{thm}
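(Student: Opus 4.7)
The plan is to revisit the proofs of Theorems 1, 2, 3 and observe that, once orientations are dropped from both the stratifolds and the $G$-actions, each step goes through with only formal modifications. Concretely, I would replace "oriented $k$-dimensional stratifold" by an unoriented one, "orientation preserving free $G$-action" by a free $G$-action, and the oriented proper Fredholm maps in the definition of $SH^*_G$ by their unoriented analogues; the resulting bordism groups are $\mathbb{Z}/2$-vector spaces because any such $(S,f)$ is its own additive inverse via the usual $S\times [0,1]$ argument carried out without sign conventions.

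For Theorem 1 with $\mathbb{Z}/2$-coefficients, the first step is to invoke Kreck's mod-$2$ identification of unoriented stratifold (co)homology with singular (co)homology with $\mathbb{Z}/2$-coefficients for trivial $G$, together with the Hilbert manifold extension of Kreck--Tene in its mod-$2$ version. The rest of the argument is then the same as in the oriented integral case: verify the Eilenberg--Steenrod axioms (homotopy invariance, excision, dimension, long exact sequences of pairs) on $G$-CW complexes for the homology theory, and use the multiplicative structure together with the Hilbert manifold model of $EG$ on the cohomology side. Nothing in the comparison map to Borel cohomology $H^*_G(-;\mathbb{Z}/2)$ uses orientation data beyond fixing integral signs.

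For Theorem 2, the proof given in the oriented case transfers verbatim because unoriented Poincar\'e duality with $\mathbb{Z}/2$-coefficients is available for arbitrary smooth manifolds. The duality map $PD$ is defined on representatives by the same geometric operation (forgetting compactness on one side and imposing properness on the other), but no orientation class on $M$ or on the representing stratifolds is needed. For Theorem 3, the natural transformation $DSH^*_G(M;\mathbb{Z}/2)\to SH^*_G(M;\mathbb{Z}/2)$ given by $(f\colon S\to M)\mapsto (f\times \mathrm{id}\colon S\times EG\to M\times EG)$ is well defined without orientation hypotheses; stratifold Tate (co)homology is then defined as the relative bordism theory exactly as in the oriented setting, and the long exact sequences arise from the same geometric mapping-cone argument.

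The main obstacle is purely bookkeeping: one has to inspect each lemma underlying Theorems 1--3 and check that orientations were used only to fix signs or preferred generators, never to make transversality, gluing, or cobordism arguments work. A pleasant byproduct is that the standing hypothesis that the adjoint representation of $G$ is orientable becomes vacuous in the mod-$2$ setting, since every real representation is $\mathbb{Z}/2$-orientable; thus the mod-$2$ theorems are potentially valid for arbitrary compact Lie groups, although for uniformity we state them under the same hypothesis as the integral ones.
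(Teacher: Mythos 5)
Your proposal matches the paper's treatment: the paper gives no separate proof of this theorem, simply asserting that the constructions and the arguments for Theorems 1--3 carry over once the orientation of the top stratum is dropped, which is exactly the check you carry out (including the correct observation that orientability of the adjoint representation is needed only in part (c) of the Lemma, to orient orbit spaces, and so plays no role mod $2$). The one definitional point to keep explicit is that ``unoriented'' must still retain the requirement that the codimension-one stratum is empty; otherwise every cycle bounds its cone and the $\mathbb{Z}/2$ theories collapse.
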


One of the motivations for geometric definitions of (co)homology theories is to make things more concrete. The aim of this paper is mainly to construct the theories and to prove the fundamental theorems but we briefly address this question in the last section where we give some concrete computations.

The paper is structured as follows. In Section 2 we give the definition of  equivariant stratifold (co)homology  and prove Theorem 1. In Section 3 we define backwards (co)homology and prove Theorem 2. In Section 4 we define stratifold Tate (co)homology and prove Theorem 3 and 4. In Section 5 we give an example where the groups
$\widehat{SH}_{G}^{k}(M)$ are computed and concrete generators are found.
We use Poincar\'e duality to describe a product in the cohomology of the
free loop space of the classifying space $BG$. 

This paper is based  in part on the author's PhD thesis \cite{Te}. Some of the details which are being discussed here can be found in more detail in the original text. This thesis was written under the
direction of Matthias Kreck at the Hausdorff Research Institute for
Mathematics (HIM) in the University of Bonn. The author would like
to thank Matthias Kreck for his support, including sharing some of his ideas regarding the construction of some of the theories, the Hausdorff institute and the Hausdorff Center for Mathematics (HCM) for
financial support. A part of the writing of this paper was done in
Postech.

\section{Stratifolds and  equivariant stratifold homology and cohomology}

We begin with the definition of stratifolds given by Kreck \cite{Kr}. He defines them as topological spaces equipped with a sheaf of real continuous functions fulfilling certain conditions. In this language, a smooth manifold is defined as a space $M$ together with a sheaf which is locally isomorphic to $\mathbb R^m$ equipped with the sheaf of smooth real functions. Thus, for a smooth manifold, the sheaf is given by the sheaf of smooth real functions on $M$. 

We prepare for the definition of stratifolds with the introduction of some notation. 

Let $(S,F)$ be a pair consisting of a topological space $S$ together with a subsheaf of the sheaf of real continuous functions $F$. Denote: 
\begin{itemize}
\item $T_{x}S$, the tangent space at a point $x\in S$ - the vector space
of derivations of germs $\Gamma_{x}(F)$. 
\item $S_{k}=\left\{ x\in S|dim(T_{x}S)=k\right\} $ - the $k^{th}$ stratum.
\item $\Sigma_{k}=\cup_{i\leq k}S_{i}$ - the $k^{th}$ skeleton.\end{itemize}
\begin{defn}
{\em An {\bf n dimensional stratifold} is a pair $(S,F)$ where $S$ is a
locally compact, Hausdorff space with countable basis and $F$ is
a subsheaf of the sheaf of continuous real functions, called smooth
functions, fulfilling the following properties:
\begin{itemize}
\item The restriction of $F$ to $S_{k}$ gives $S_{k}$  the structure of a smooth
manifold.
\item The restriction map $\Gamma_{x}(F)\to\Gamma_{x}(F|_{S_{k}})$ of germs
to each stratum is an isomorphism. 
\item $S=\Sigma_{n}$.
\item All skeleta are closed. 
\item There exists subordinated  smooth partitions of unity.
\end{itemize}
If $S$ and $S'$ are stratifolds, a {\bf morphism (or a smooth map)} between
them is a continuous map $f:S\to S'$ such that the pullback of smooth
maps is smooth. We often call isomorphisms {\bf diffeomorphisms}.

An {\bf n dimensional stratifold with boundary} is a pair of topological spaces $(T,\partial T)$, where $\partial T$ is a closed subspace, together with the structure of an $n$ dimensional stratifold on $T\setminus\partial T$
and the structure of an $(n-1)$ dimensional stratifold on $\partial T$, together with
a germ of collars, where a collar is a homeomorphism $c:\partial T\times[0,\varepsilon)\to U$,
where $U$ is an open neighbourhood of $\partial T$ in $T$, such that $c$
restricted to $\partial T\times(0,\varepsilon)$ gives a diffeomorphism
onto its image.}
\end{defn}

For some fundamental properties and constructions of stratifolds see Kreck \cite{Kr}.

\noindent {\bf Example:} One of the most important features of stratifolds is that the cone over a stratifold is a zero bordism. Here the {\bf cone} over $(S,f)$ is defined as $(S \times [0,1]/_{S \times \{0\}},C(F))$, where the smooth functions in $C(F)$ are defined as those whose restriction to $S \times (0,1)$ are smooth functions in the product stratifold structure and which are locally constant near the cone point. The last property is equivalent to the condition $\Gamma_{x}(F)\cong \Gamma_{x}(F|_{S_{0}})$ for the cone point $x$. 

This implies that every stratifold is zero bordant. To avoid that all (co)homology theories are trivial we will require that the codimension $1$ stratum is always empty. Then a zero dimensional stratifold, which is a zero dimensional manifold, is in general  not zero bordant. To control orientation we also equip the top stratum with an orientation:
\begin{defn}
{\em A stratifold is {\bf oriented} if the codimension $1$ stratum is empty and the top stratum is oriented. A stratifold with boundary is oriented if the interior and the boundary are oriented and the collar is orientation preserving, where we equip the cylinder over the boundary with the product orientation. }
\end{defn}

After we have defined oriented stratifolds and stratifolds with boundary one can define bordism groups as for oriented manifolds. We will generalize this to the equivariant setting. 

\begin{defn} {\em A {\bf $G$-stratifold} is given by a smooth action of $G$ on $S$, where smooth means that $G \times S \to S$ is smooth. An {\bf oriented, free $G$-stratifold} is a $G$-stratifold such that the action is free and orientation preserving. A $G$-stratifold is called {\bf regular} if for each $x$ in the $k^{th}$ stratum $S_k$ there is a $G$-invariant open neighbourhood $V$ in $S$ and $G$-stratifold $F$ with trivial action such that $V$ is equivariantly diffeomorphic  to $(V \cap S_k)   \times F$.}
\end{defn}

We will only consider regular, oriented, free $G$-stratifolds. Together with equivariant maps they will be representatives of our (co)homology classes. 

\begin{rem*}
{\em  One might define oriented, free $G$-stratifolds in a stronger sense, so that $\pi_0(G)$ acts orientation preserving on $S/G_1$, where $G_1$ is the component of the identity. This is equivalent to the condition that the action is orientation preserving since $G$ has the property that the adjoint representation is orientable, but for other compact Lie groups these two conditions are different (for example, take $G=O(2)$ acting by left multiplication on $S=SO(3)$). This suggests that there are at least two ways to generalize the theories which appear in this paper to compact Lie groups. In both cases the stratifold Tate groups will not agree with the Tate groups in \cite{GM}. This point was overlooked in \cite{Te}, and some of the statements there are not correct, unless one restrict the groups like here, or need reformulation. 
}
\end{rem*}

We begin with the definition of  equivariant stratifold homology. 

\begin{defn}{\em 
Let $X$ be a $G$-space. We denote the bordism classes of equivariant maps $f: S \to X$, where $S$ is a compact, regular, oriented, free $G$-stratifold by 
$$
SH_{k}^G(X),
$$
the {\bf $k^{th}$ equivariant stratifold
homology group} of $X$.  Addition is given  by disjoint union. Induced maps are given by composition. } 
\end{defn}

Exterior product is given by Cartesian product. The construction of the boundary operator and the proof of Mayer-Vietoris for equivariant open subsets is similar to \cite{Kr}, giving it the structure of an equivariant homology theory.

Quillen \cite{Qu} has given a geometric description of complex cobordism groups but only for smooth manifolds. Kreck has used this idea to give a geometric interpretation of singular cohomology for smooth oriented manifolds using stratifolds. We would like to generalize this to give a cohomology theory $SH^k_G(M)$ for oriented $G$-manifolds $M$, which is naturally isomorphic to the cohomology of the Borel construction $H^k_G(M,\mathbb Z)$. This is not completely obvious. Let us indicate the difficulties. If there was a finite dimensional manifold homotopy equivalent to $BG$, we would proceed as follows. We would consider bordism classes of (non compact) regular, oriented, free $G$-stratifolds $S$ together with an equivariant, proper, smooth map  $f: S \to M \times EG$. But there are no finite dimensional manifold models for $BG$, only models given by Hilbert manifolds. Thus it is desirable to extend the whole setting to Hilbert manifolds. This was carried out in joint work with Kreck \cite{K-T}. We summarize the concepts. As before a Hilbert stratifold is a topological space $S$ together with a sheaf $F$ of continuous functions. One can also consider the tangent spaces but one cannot use their dimension to define the strata. Since in the end we are interested in smooth maps from a Hilbert stratifold to a Hilbert manifold we only define singular Hilbert stratifolds in a Hilbert manifold $P$. We indicate here the basic idea and refer for details to \cite{K-T}. We require that the differential of $f: S \to P$ has finite dimensional kernel and cokernel at each point of $S$. Thus we can speak about the Fredholm index at each point and the strata are defined as those points where the index is fixed. In the finite dimensional setting these are precisely the strata as defined above. Once one has the strata all other concepts of stratifolds can be verbally generalized to the infinite dimensional setting to give the definition of {\bf singular Hilbert stratifolds}.  We say that a singular Hilbert stratifold has degree $k$, if there are no points $x \in S$ where the Fredholm index is larger than $-k$. There is also the concept of an orientation of a singular Hilbert stratifold in terms of the determinant line bundle roughly given by the tensor products of the kernel and cokernel bundle on the top stratum. This is a bit delicate and details can be found in \cite{K-T}.

Now we proceed as above for homology and define singular $G$-stratifolds in a Hilbert manifold $P$. With them we define the equivariant cohomology groups $SH^k_G(P)$ as follows:

\begin{defn}{\em 
Let $P$ be a $G$-Hilbert manifold. We denote the bordism classes of equivariant, regular, oriented, free, singular $G$-Hilbert stratifolds $f: S \to P \times EG$, where $f$ is of degree $k$, by
$$
SH^{k}_G(P),
$$
the {\bf $k^{th}$ equivariant stratifold
cohomology group} of $P$.  The addition is given by disjoint union. }
\end{defn}
 
Induced maps are not so easy in this setting. But  if $f: P \to L$ is an equivariant submersion, then one can define induced maps by pull back. In particular they are defined for projections of smooth $G$-vector bundles. In this situation the induced map is an isomorphism. This follows from the fact that it holds for ordinary equivariant cohomology and we will give in the proof of Theorem 1 a direct identification with the stratifold cohomology groups which commutes with the induced maps for submersions. Then one can define induced maps for general maps by writing the map as the composition of the embedding $(Id, f):P \to P \times L$  and the projection  $\pi_L: P \times L\to L$. Then one considers an equivariant tubular neighborhood $U$ of $P\subseteq P \times L$, pulls the class back to $P \times L$, restricts it to $U$ and composes it with the inverse of the map induced by the projection $U \to P$. This way we obtain a contravariant functor. A coboundary operator is defined in an analogous way to the non equivariant way, and exterior product by Cartesian product. The Mayer Vietoris sequence for invariant open subsets holds as before. $SH^*_G$ is a multiplicative equivariant cohomology theory on the category of $G$-Hilbert manifolds.

Now we prepare the proof of Theorem 1 with a Lemma:

\begin{Lemma}
a) Let $S$ be a free $G$-stratifold. The sheaf on $S/G$, given by the functions on open subsets of $S/G$ which pull back to smooth functions on their preimage in $S$ defines a  stratifold structure on the orbit space.  Similarly if $f:S \to P$ is an equivariant singular Hilbert $G$-stratifold and $G$ acts freely on $S$ and $P$, then there is an induced singular Hilbert stratifold $f/G:S /G \to P/G$. The projection $p: S \to S/G$ is a smooth principal bundle, i.e. the local trivializations are isomorphisms. If $S$ is $G$-regular then $S/G$ is regular.

\noindent b) If $p: \tilde S \to S$ is a smooth $G$-principal bundle over a stratifold $S$ resp.\ $p: \tilde P \to P$ is a smooth principal bundle over the Hilbert manifold $P$ and $f: S \to P$ is a singular Hilbert stratifold then there is a stratifold structure on $\tilde S$ resp.\ a singular stratifold structure on $\tilde f: f^*(\tilde P) \to \tilde P$ such that if we pass to the orbit space we obtain $S$ resp.\ the singular Hilbert stratifold $f: S \to P$ back. And if we start with the situation in a), pass to the orbit space and go again to the total space of the principal bundles we obtain the original situation again. If $S$ is regular then $\tilde{S}$ is $G$-regular.

\noindent c) If $S$ is an oriented, free $G$-stratifold resp.\ $G$ acts on $f:S \to P$ orientation preserving, then there is an induced orientation on the orbit space, and an orientation on the orbit space gives rise to an orientation on the total space such that the action is orientation preserving.
\end{Lemma}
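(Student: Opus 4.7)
The plan is to reduce all three parts of the lemma to a single local picture. For any point $x$ of a free $G$-stratifold $S$, I want to produce a $G$-invariant open neighbourhood $V$ of the orbit $Gx$ together with an equivariant diffeomorphism $V \cong G \times W$, where $G$ acts on the first factor by left translation and trivially on $W$, and $W$ is an open subset of some stratifold. Once this local model is in hand, parts (a), (b) and (c) are all verifications against this chart.

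To produce the local model for (a), I first apply the standard slice theorem for a free smooth action of a compact Lie group on a manifold to the stratum $S_k$ through $x$, which yields an equivariant trivialization on (a possibly shrunken) $V \cap S_k$. I then extend transversally using $G$-regularity, which supplies an equivariant product chart $V \cong (V \cap S_k) \times F$ with $F$ carrying the trivial action; splicing the two decompositions writes $V \cong G \times W$ with $W = ((V \cap S_k)/G) \times F$. With such charts the quotient sheaf on $S/G$ is locally identified with the sheaf of smooth functions on $W$, so $S/G$ inherits a stratifold structure whose strata are $(S/G)_j = S_{j+\dim G}/G$. Closedness of the skeleta descends from $S$, and subordinated partitions of unity descend by $G$-averaging from invariant ones above. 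That $S \to S/G$ is a smooth principal bundle with isomorphic local trivializations is now immediate, and regularity of $S/G$ is visible too since the factor $F$ is unchanged. The singular Hilbert variant is parallel: strata are indexed by the Fredholm index of $f$, and the index is preserved under passing to the orbit space because the orbit direction is absorbed into both the source and the target of $df$.

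Part (b) is the reverse construction: a smooth $G$-principal bundle is locally of the form $U \times G$, on which I impose the product stratifold structure, and the local structures glue because the bundle transitions $(x,g) \mapsto (x,\phi(x)g)$ are smooth in both variables; a regular chart $V \cong (V \cap S_k) \times F$ on the base lifts to a $G$-regular chart on the total space. The two composites of (a) and (b) reproduce the original data because in every local chart both sides coincide with $G \times W$ on the nose. Part (c) rests on the short exact sequence of tangent spaces on the top stratum,
\[
0 \to \mathfrak{g} \to T_x S \to T_{[x]}(S/G) \to 0,
\]
in which the kernel is the orbit direction, canonically identified with the Lie algebra of $G$. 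Since we assume the adjoint representation is orientable, $\mathfrak{g}$ carries a canonical orientation, so orientations on any two of the three terms determine the third; the orientation-preserving hypothesis makes this correspondence globally consistent. The singular Hilbert case is the analogous relation on determinant line bundles.

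The main obstacle is precisely what is flagged in the remark preceding the lemma: one must ensure that the local slice picture glues consistently across strata, and this is where regularity is doing the real work; without it, one would only have local products along a single stratum at a time. A secondary subtlety is that (c) has the clean statement ``orientations correspond'' only because, with the adjoint representation orientable, the two candidate definitions of oriented free $G$-stratifold coincide; otherwise the statement would need to be reformulated, which is why the lemma is restricted to groups satisfying the paper's standing hypothesis.
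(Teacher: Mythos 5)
Your strategy of reducing everything to an equivariant local product chart $V \cong G \times W$ is genuinely different from the paper's proof, which never constructs such a chart: for part (a) the paper verifies the sheaf axioms for the quotient directly --- the strata are $(S/G)_k = S_{k+\dim G}/G$, germ extension and subordinated partitions of unity are obtained by lifting to $S$ and averaging over the compact group, and closedness of the skeleta descends because the skeleta of $S$ are invariant --- and for part (b) it avoids gluing local trivializations by smoothing a classifying map to $BG$ (refined to a smooth pullback square against $EG \to BG$) and pulling the structure back. The one genuine gap in your version is in (a): the assertion that the quotient sheaf makes $S/G$ a stratifold is made for an arbitrary free $G$-stratifold, with $G$-regularity appearing only in the separate add-on ``if $S$ is $G$-regular then $S/G$ is regular,'' whereas your construction of the chart $V \cong G\times W$ uses the regularity decomposition $V \cong (V\cap S_k)\times F$ in an essential way (the slice theorem only gives the product structure along the single stratum $S_k$, exactly as you acknowledge at the end). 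As written, your argument proves (a) only for regular $S$; for the general statement you need something like the averaging argument, or you must import regularity as a standing hypothesis, which this lemma does not do.

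Two smaller points. In (c) your exact-sequence formulation is essentially the paper's argument, but the phrase ``$\mathfrak{g}$ carries a canonical orientation'' is not quite right: orientability of the adjoint representation does not single out an orientation of $\mathfrak{g}$; rather, it guarantees that a chosen orientation of $T_1G$, transported to an orbit by $g \mapsto g\cdot x$, is independent of the base point $x$, because changing base point changes the identification by a conjugation, whose differential preserves orientation by hypothesis (the paper writes this out as $f_y\circ l_{hg^{-1}} = f_x\circ l_h\circ c_g$). That independence is the actual content and should be stated. In (b) your direct gluing of product charts is fine and arguably more elementary than the classifying-map route, provided ``smooth principal bundle'' is taken to mean that the transition functions are smooth stratifold morphisms into $G$; the paper's classifying-map argument buys the two roundtrip statements almost for free from the pullback square, whereas you obtain them chart by chart, which also suffices since the sheaf is a local object.
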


\begin{proof} a) $(S/G)_k=S_{k+dim(G)}/G$. This follows from the equivalent statement for manifolds. For extension of germs, we can use the extension of the lift to $S$ and then  average, using the fact that $G$ is compact. Uniqueness of extension in $S/G$ follows from uniqueness of extension in $S$. $S/G$ has strata up to dimension $dim(S)-dim(G)$. $G$ acts by diffeomorphisms, so it acts on each stratum separately. Therefore, the skeleta in $S$ are invariant (and closed), thus their image in $S/G$ is closed. For the partition of unity, in $S/G$ we find the partition of unity in $S$ and average. The case of Hilbert stratifolds is similar. Notice, that in the first case the dimension of the stratifold is decreased by the dimension of $G$, but in the second case, the Fredholm index is unchanged.  The assertion about regularity is clear.

\noindent b) One way to get the structure of a stratifold on $\tilde{S}$ is to look at a classifying map $f:S\to BG$ for the bundle $\tilde{S}\to S$. This can be approximated by a smooth map. This gives $\tilde{S}$ a structure of a stratifold by pullback. This is independent of the choice of $f$ since every two such maps are smoothly homotopic. The same argument works for Hilbert stratifolds as well. 

If $S$ is a free $G$-stratifold, then there is a classifying map $f:S\to EG$, such that the following diagram is a smooth pull back diagram:

\[
\begin{array}{ccc}
S & \to & EG\\
\downarrow &  & \downarrow\\
S/G & \to & BG \end{array}\]

\noindent This implies that if we start with $S$ then define $S/G$ and give $S$ the smooth structure described here we get the original structure we started with. If we start with $S/G$, define the structure on $S$ and go back we get again $S/G$ since the smooth maps on $S/G$ are exactly those maps which lift to smooth maps on $S$. The assertion about regularity is clear.

\noindent c) An orientation of a stratifold is given by an orientation of its top stratum. Therefore, to orient $S/G$ it is enough to assume $S$ is a manifold, which we denote by $M$. We want to orient the normal bundle of the orbits in a continuous way ($M\to M/G$ is a locally trivial bundle). Since $M$ is oriented, this is equivalent to orienting all orbits in a continuous way. Fix an orientation of $T_1 G$, the tangent space of $G$ at the identity, and orient $G$ by left translations, which we denote by $l_g (k)=gk$. Denote by $r_g (k)=kg$  and by $c_g (k)=g^{-1}kg$.

Let $O \subseteq M$ be an orbit. For each $x\in O$ the diffeomorphism $f_x:G\to O$  given by $f_x(g)=g\cdot x$ induces an orientation of $O$. The orientation we get in a point $z=h\cdot x$ is given by the differential of the composition $f_x \circ l_h$ applied to the orientation of $T_1 G$. We want to check the dependency in $x$. If we choose $y\in O$ then there exists a unique $g\in G$ such that $y=g\cdot x$ then the orientation in $z$ is given by the differential of the composition $f_y \circ l_{hg^{-1}}$ applied to the orientation of $T_1 G$. 
But $f_y \circ l_{hg^{-1}}=  f_x \circ r_g \circ l_{hg^{-1}}= f_x \circ l_{h} \circ c_{g}$. Since the adjoint representation is orientable both orientations agree, and we are done.
\end{proof}

Now we are ready to prove Theorem 1:

\begin{proof} (Theorem 1) Fix a model for $EG$ which is a $G$-CW complex. Let $X$ be a $G$-CW complex. Given a  class  $[f: S \to X]\in SH_k^G(X)$, since $G$ acts freely on $S$, which is paracompact, there is an equivariant map $g:S \to EG$. This defines a map $(f,g): S \to X \times EG$, where the right side is equipped with the diagonal action of $G$. We divide by the $G$ action to obtain a homology class in $SH_{k - dim (G)}(X \times _G EG)$. Since $g$ is determined up to homotopy, its choice does not change the bordism class. This defines a homomorphism
$$
SH_k^G(X) \to SH_{k - dim (G)}(X \times _G EG)
$$
since the same can be done for the bordism. By Lemma 12, this map is an isomorphism. This construction is natural and commutes with the boundary operator.

Now, by theorem $20.1$ in \cite {Kr}, for all $CW$ complexes $Y$ there is an isomorphism  $SH_k(Y) \cong H_k(Y, \mathbb Z)$, in particular $SH_k(X \times _G EG) \cong H_k(X \times _G EG, \mathbb Z)$ in a natural way.  Combining the two isomorphisms we obtain a natural isomorphism of equivariant homology theories
$$
SH_k^G(X) \to H^G_{k - dim (G)}(X, \mathbb Z).
$$

We now pass to cohomology. Fix a $G$-Hilbert manifold model for $EG$ (for existence see  \cite{Ee}). Applying the same idea one obtains an isomorphism $SH^k_G(P) \cong H^k(P \times _G EG, \mathbb Z)$, where $P$ is a $G$-Hilbert manifold. Here it is a bit simpler since the map into $EG$ is already given. Note that here there is no shift of degrees since the Fredholm index on the total space and orbit space are the same. Then we apply \cite{K-T} to identify $SH^k(P \times _G EG)$ with $H^k(P \times _G EG, \mathbb Z)$ and so obtain an isomorphism
$$
SH^k_G(P) \to H^k_G(P).
$$
This isomorphism commutes with maps induced by equivariant submersions. Since the construction of induced maps for general equivariant maps is reduced to the case of submersions this proves naturality. The construction of the coboundary is analogous to the one in the non-equivariant case. Therefore, this natural transformation commutes with it. It also commutes with the cup product, hence we get a natural isomorphism of multiplicative equivariant cohomology theories.
\end{proof}

\section{The dual theories - equivariant backwards homology and cohomology}

~

We describe now equivariant homology and cohomology theories following a principal introduced by Kreck \cite{Kr}, generalizing an idea of Quillen \cite{Qu}. 
We first define backwards cohomology groups. In our geometric context those are easily defined. Assume we are given a homology theory given by bordism classes of maps from compact stratifolds to a space $X$. The corresponding cohomology theory is defined for oriented manifolds, and is given by bordism classes of proper, smooth maps, where we don't require that the stratifold is compact. If $M$ is a closed manifold then properness of the map is equivalent to the fact that the stratifold is compact, and so we obtain essentially the same groups. This is a form of Poincar\'e duality, and the grading is made in such a way that Poincar\'e duality has the usual form.

\begin{defn}{\em 
Let $M$ be an oriented $G$-manifold of dimension $m$. Denote the set of bordism classes of
equivariant, proper smooth maps from regular, oriented, free $G$-stratifolds of dimension
$m-k$ by
$$
DSH_{G}^{k}(M).
$$}
\end{defn}

Addition is given by disjoint union, and the exterior product is given by Cartesian product. The definition of the coboundary and the proof of the Mayer Vietoris for invariant open subsets is similar to \cite{K-T}.  This cohomology theory is called {\bf equivariant backwards cohomology}.
Note that the groups can be non-trivial for negative $k$, and that in general $DSH_{G}^{*}(M)$ is a ring with no unit.

We explain how to define the induced maps in this case. We note two things about free actions. First, equivariant smooth approximation: an equivariant, continuous (proper) map $g:S\to N$  from a free $G$-stratifold to a free $G$-manifold is homotopic to a smooth equivariant (proper) map. To see this, we pass to the orbit map. Since the action is free the orbit spaces have the structure of a stratifold and a smooth manifold respectively.  Then use the non equivariant smooth approximation, as appears in \cite{Kr}. 
Second, equivariant transversal approximation: if in addition we have an equivariant smooth map $f:M\to N$ where $M$ is also a free $G$-manifold, then $g$ is homotopic to an equivariant proper smooth map which is transversal to $f$. To see this, we again pass to the orbit spaces and approximate $g/G$ by a transversal (proper) map. This can be done inductively over the skeleta, using regularity. If the map is transversal when restricted to the $k^{th}$ skeleton, then, by regularity, it is also transversal on an open neighbourhood of it. Then we approximate the map relative to some smaller neighbourhood of the $k^{th}$ skeleton. This could be done since the $(k+1)^{th}$ stratum is a smooth manifold. The lift of this map is transversal to $f$. 

We return to induce maps. Given an equivariant smooth map $f:M\to N$ between oriented $G$-manifolds, and an element represented by a proper map $g:S\to N$, where $S$ is an oriented, free $G$-stratifold as above. Let $EG'$ be a highly connected free $G$-manifold, such that there is an equivariant map $g':S\to EG'$, which can be assumed to be smooth. 
Look at the maps $f\times Id:M\times EG'\to N\times EG'$ and $g\times g':S\to N\times EG'$. Notice that $g$ is transversal to $f$ if and only if $f\times Id$ is transversal to $g\times g'$. Since now the action on all spaces is free, transversality can be achieved as noted above. Then, the composition of the approximated map $S\to N\times EG'$ with the projection on $N$ is transversal to $f$. Notice that those approximations can be done relative to a closed subset, hence we can apply them for bordisms rel.\ boundary.

For later use we prove the following

\begin{prop} Let $M$ be a free, oriented $G$-manifold. Then dividing by the $G$-action gives an isomorphism
$$
DSH_G^k(M) \cong H^k(M/G;\mathbb Z).
$$
\end{prop}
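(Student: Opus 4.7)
The plan is to reduce everything to the non-equivariant case via Lemma 12 and then apply Kreck's identification of geometric stratifold cohomology with singular cohomology. The key point is that, when the $G$-action is free and orientation preserving, $M \to M/G$ is a principal $G$-bundle with oriented total and base space, so the orbit/pullback correspondence of Lemma 12 sets up a bijection between equivariant bordism data on $M$ and bordism data on $M/G$.

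First I would define a map $\Phi \colon DSH_G^k(M) \to DSH^k(M/G)$ by sending the class of an equivariant proper map $f \colon S \to M$ to the class of the quotient $\bar f \colon S/G \to M/G$. Lemma 12(a) gives $S/G$ the structure of a regular stratifold of dimension $(m-k)-\dim G = \dim(M/G)-k$, which is the correct codimension for a degree-$k$ class on $M/G$. Lemma 12(c) equips $S/G$ with the quotient orientation, and properness of $\bar f$ follows from properness of $f$ together with compactness of $G$: the projection $M \to M/G$ has compact fibers, so the preimage of a compact set in $M/G$ is compact in $M$, and applying $f^{-1}$ followed by $\pi_S$ yields a compact set in $S/G$. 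Bordisms descend by the same construction, so $\Phi$ is well-defined.

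Next I would construct the inverse $\Psi \colon DSH^k(M/G) \to DSH_G^k(M)$ using Lemma 12(b). Given a proper map $h \colon T \to M/G$ from an oriented stratifold, pull back the principal $G$-bundle $M \to M/G$ along $h$ to obtain a free $G$-space $\tilde T \to T$ together with a $G$-equivariant map $\tilde h \colon \tilde T \to M$. Lemma 12(b) endows $\tilde T$ with a regular free $G$-stratifold structure, Lemma 12(c) produces a compatible orientation, and properness of $\tilde h$ follows from properness of $h$ since the bundle fibers are compact. One then checks, again using Lemma 12(b), that the two constructions are mutually inverse up to canonical identification: for $\Psi \circ \Phi$, the classifying map of the principal bundle $S \to S/G$ is precisely $\bar f$ together with $f$, so pulling $M \to M/G$ back along $\bar f$ returns $S \to S/G$ with its original equivariant map to $M$; for $\Phi \circ \Psi$, Lemma 12(b) says that quotienting a pulled-back total space recovers the base. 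This yields $DSH_G^k(M) \cong DSH^k(M/G)$, and composing with Kreck's isomorphism $DSH^k(M/G) \cong H^k(M/G;\mathbb{Z})$ for the oriented manifold $M/G$ (the cohomological counterpart of [Kr, Theorem~20.1]) completes the proof.

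The main technical obstacle will be the orientation bookkeeping under Lemma 12(c) in both directions: one must verify that passing orientations back and forth between $S$ and $S/G$ via the fixed orientation on $T_1 G$ is involutive, commutes with bordism, and matches the orientation induced from pullback along $h$. This is precisely where the hypothesis that the adjoint representation of $G$ is orientable enters. The properness verification is a secondary issue but is essentially formal from compactness of $G$.
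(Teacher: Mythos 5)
Your proposal is correct and follows essentially the same route as the paper: the paper's proof likewise sends $[S\to M]$ to $[S/G\to M/G]$, invokes Lemma~12 for well-definedness, the orientation, and the inverse via pullback of the principal bundle, and then identifies $SH^k(M/G)$ with $H^k(M/G;\mathbb Z)$ by Kreck's theorem. You have merely spelled out the properness and orientation checks that the paper delegates to Lemma~12 (note only that what you call $DSH^k(M/G)$ is, in the non-equivariant finite-dimensional setting, exactly the group the paper denotes $SH^k(M/G)$).
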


\begin{proof} By lemma 12 we can orient $M/G$, hence $SH^k(M/G)$ is defined. Define
$$
DSH_G^k(M) \to SH^k(M/G)
$$
by $[S\to M] \mapsto [S/G\to M/G]$. By lemma 12 the right side is well defined and the map is an isomorphism. The latter group is isomorphic to $H^k(M/G;\mathbb Z)$. 

\end{proof}

We now define backwards homology. 
\begin{defn}{\em 
Let $X$ be a $G$-space. For $k\in\mathbb{Z}$,
define $DSH_{k}^{G}(X)$ to be the set of bordism classes of equivariant
maps $(g_1,g_2):S\rightarrow X\times EG$ such that  $g_2:S\to EG$
 is a regular, oriented, free, singular $G$-Hilbert stratifold of index $-k$. Induced maps are defined by composition and exterior product by Cartesian product. }
\end{defn}

The proof of Theorem 2 is essentially straightforward:

\begin{proof} (Theorem 2) 
We first show that the map $PD:DSH_{G}^{k}(M)\rightarrow SH_{m-k}^{G}(M)$ is an isomorphism. An element in $DSH_{G}^{k}(M)$ is represented by an equivariant, proper, smooth map $f:S\to M$ where $S$ is a regular, oriented, free $G$-stratifold of dimension $m-k$. Since $M$ is compact and $f$ is proper, it follows that $S$ is compact, so this represents an element in $SH_{m-k}^{G}(M)$. Since every equivariant continuous map $f':S\to M$ is $G$-homotopic to a smooth one (as noted before), $PD$ is surjective, and since the same argument holds for bordisms relative to the boundary it is injective. 

We now show that $PD:SH_{G}^{k}(M)\rightarrow DSH_{m-k}^{G}(M)$ is an isomorphism. First we show that it is well defined. An element $SH_{G}^{k}(M)$ is represented by a regular, oriented, free, singular $G$-Hilbert stratifold $(f_1,f_2): S \to M \times EG$, where $(f_1,f_2)$ is of Fredholm index $-k$. $M$ is finite dimensional so $(f_1,f_2)$ is Fredholm of index $-k$ if and only if $f_2$ is Fredholm of index $m-k$, $M$ is compact so $(f_1,f_2)$ is proper if and only if $f_2$ is proper. Therefore, it represents an element in $DSH_{m-k}^{G}(M)$. If we show that every continuous equivariant map $f_1:S\to M$ can be approximated by an equivariant smooth map then we will be done, since it will follow that $PD$ is surjective, and by the same argument applied for the bordism relative to the boundary we show it is injective. 

To show that $f_1$ is equivariantly homotopic to a smooth equivariant map it is enough to show that the map $(f_1,f_2)$ is. But in this case the action on both sides is free so it is enough to show it on the orbit space. On the orbit space maps can be approximated by smooth maps using a partition of unity and the fact that every infinite dimensional Hilbert manifold can be smoothly embedded as an open subset of the Hilbert space. Notice that we do not require that this smooth approximation will be either proper or Fredholm. Regarding orientation, the fact that $M$ is oriented gives the map $(f_1,f_2):S\to M\times EG$ a natural orientation.

\end{proof}

\section{\label{sec:Stratifold-Tate-cohomology}Stratifold Tate cohomology}

As noted before, for an oriented $G$-manifold $M$ we define a multiplicative natural transformation 
 $\Theta:DSH^k_G(M)\to SH^k_G(M)$ by $[S\to M]\to[S\times EG\to M\times EG]$. 

Given such a natural transformation between two bordism theories one
can define a {}``relative term'' using maps from stratifolds with
boundary, which fits into a long exact sequence. We now define this
{}``relative term'' which we call stratifold Tate cohomology. It was shown in \cite{Te}, that it is isomorphic to Tate cohomology when $G$ is finite.

\begin{defn}{\em 
Let $M$ be an oriented $G$-manifold. Define $\widehat{SH}_{G}^{k}(M)$ to be the
set of bordism classes of maps of singular $G$-Hilbert stratifolds of
degree $k$ with boundary $[f:(T,S\times EG)\to M\times EG]$ $ $where
$S\times EG$ is the boundary of $T$ and the map $f$ restricted
to it is given as a product of maps $g:S\to M$ and $Id:EG\to EG$.
We require that $S$ and $T$ will come with a free, smooth and orientation
preserving $G$-action, and the maps will be equivariant. The bordism
relation is defined as follows: 

\[
[f:(T,S\times EG)\to M\times EG]\,\,\, and\,\,\,[f':(T',S'\times EG)\to M\times EG]\]
are bordant if and only if $[S\to M]$ is bordant to $[S'\to M]$
via some bordism $B$ and the map $[T\cup B\times EG\cup T'\to M\times EG]$
is bordant to an element of the form $[B'\times EG\to M\times EG]$. It is not hard to show that this is an equivalence relation.

With this definition $ $$\widehat{SH}_{G}^{k}$ can be given the
structure of an equivariant cohomology theory. Induced maps are given
by pullback and the coboundary operator is defined in a similar way
to the one for $SH^{k}(M)$.}
\end{defn}

A natural transformations $SH_G^{k}(M)\to\widehat{SH}_{G}^{k}(M)$ is given
by $[T\to M\times EG]\mapsto[(T,\emptyset)\to M\times EG]$ and a
natural transformation $\widehat{SH}_{G}^{k}(M)\to DSH^{k+1}(M)$ is given
by $[(T,S\times EG)\to M\times EG]\to[S\to M]$. It is apparent from the bordism
relation in $\widehat{SH}_{G}^{k}(M)$ that this induces a long exact
sequence:
\[
...\to DSH_{G}^{k}(M)\to SH_{G}^{k}(M)\to\widehat{SH}_{G}^{k}(M)\to DSH_{G}^{k+1}(M)\to... .\]

\noindent This proves Theorem 3.

Now we prove that the Tate cohomology groups are an obstruction to the existence of free actions, if the adjoint representation is orientation preserving:

\begin{proof} (Theorem 4) Assume the action on $M$ is free and orientation preserving then the map $\Theta:DSH^k_G(M)\to SH^k_G(M)$ is an isomorphism, since in this case both
sides agree with the singular cohomology of the quotient space by Proposition  14 and Theorem 1. Moreover, $\Theta$ is the map in the long exact sequence from Theorem 3 so $\widehat{SH}_{G}^{k}(M)$ vanishes for all $k$. 

If the action is not free we can find an orbit $G/K$ for some non trivial subgroup $K$. Since $G/K$ might be non orientable, choose any non trivial finite subgroup $H\leq K$. This allows to factorize the map $G/H\to pt$ ($pt=point$) through $M$ via the map $G/H \to M$. Thus we will be finished if we show that the map $\widehat{SH}_{G}^{k}(pt)\to \widehat{SH}_{G}^{k}(G/H)$ is non trivial for some $k$. By definition, the groups $DSH_{G}^{k}(pt)$ and $DSH_{G}^{k}(G/H)$ are trivial for $k>-dim(H)$. Therefore, by the long exact sequences of Theorem 3 it is enough to show that the map $SH_{G}^{k}(pt)\to SH_{G}^{k}(G/H)$ is non trivial for some large $k$. This follows from a theorem by Swan (\cite{Sw2}, Theorem 2) where he shows that the map $f^{*}:H^{k}(BG)\to H^{k}(BH)$ is non zero for infinitely many values of $k$. Here we use Theorem 1 to translate this information to  equivariant stratifold cohomology. 
\end{proof}

\begin{prop}
Let $M$ be an oriented $G$-manifold, and $\Sigma$ the set of all points with non trivial stabilizer. If $U$ is an equivariant open neighbourhood of $\Sigma$ then the restriction map $\widehat{SH}_{G}^{k}(M) \to \widehat{SH}_{G}^{k}(U)$ is an isomorphism. If $\Sigma$ is an oriented submanifold and the action is orientation preserving then the map $\widehat{SH}_{G}^{k}(M) \to \widehat{SH}_{G}^{k}(\Sigma)$ is an isomorphism.\end{prop}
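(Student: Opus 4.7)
The plan is to deduce both statements from the vanishing Theorem 4, combined with Mayer--Vietoris for an equivariant open cover of $M$.

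For the first statement, I take $V = M \setminus \Sigma$. Since the locus of points with trivial stabilizer is open, $V$ is an open, $G$-invariant subset on which $G$ acts freely, and it inherits the orientation of $M$. Likewise $U \cap V$ is an equivariant open subset of $V$ with free action. Theorem 4 applied to both gives
\[
\widehat{SH}_G^k(V) = 0 = \widehat{SH}_G^k(U \cap V) \quad \text{for every } k \in \mathbb{Z}.
\]
Now I invoke Mayer--Vietoris for $\widehat{SH}_G^*$ on the equivariant open cover $\{U, V\}$ of $M$. Both $DSH_G^*$ and $SH_G^*$ satisfy Mayer--Vietoris for equivariant open subsets (as stated in Sections 2 and 3), and the theories are linked by the natural long exact sequence of Theorem~3; a straightforward diagram chase (or braid argument) with the five lemma then yields a Mayer--Vietoris sequence for $\widehat{SH}_G^*$. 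Plugging the vanishing in, the sequence
\[
\cdots \to \widehat{SH}_G^{k-1}(U \cap V) \to \widehat{SH}_G^k(M) \to \widehat{SH}_G^k(U) \oplus \widehat{SH}_G^k(V) \to \widehat{SH}_G^k(U \cap V) \to \cdots
\]
collapses to the isomorphism $\widehat{SH}_G^k(M) \cong \widehat{SH}_G^k(U)$.

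For the second statement, if $\Sigma$ is a $G$-invariant, oriented submanifold and $G$ acts on it by orientation-preserving diffeomorphisms, compactness of $G$ provides an equivariant tubular neighbourhood $U$ of $\Sigma$ in $M$, realized as the total space of the normal $G$-vector bundle of $\Sigma$. The projection $\pi: U \to \Sigma$ is an equivariant deformation retract, and $U$ inherits an orientation (from $M$, or equivalently from $\Sigma$ and the normal bundle, compatibly). Applying equivariant homotopy invariance of $\widehat{SH}_G^*$ --- again deduced by the five lemma from homotopy invariance of $DSH_G^*$ and $SH_G^*$ and the exact sequence of Theorem~3 --- yields $\widehat{SH}_G^k(U) \cong \widehat{SH}_G^k(\Sigma)$. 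Composing with the first part finishes the proof.

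The main work is establishing Mayer--Vietoris (and homotopy invariance) for the relative theory $\widehat{SH}_G^*$; this is not treated explicitly in the excerpt, but is a formal consequence of the corresponding properties of $DSH_G^*$ and $SH_G^*$ together with the naturality of the long exact sequence of Theorem~3, and requires only diagram chasing. Once this formal infrastructure is in place, the proposition is essentially an immediate application of Theorem~4 and the observation that $M$ looks free away from $\Sigma$.
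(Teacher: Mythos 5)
Your argument is correct and follows essentially the same route as the paper: Mayer--Vietoris for the cover $\{U, M\setminus\Sigma\}$ combined with the vanishing of $\widehat{SH}_G^*$ on $M\setminus\Sigma$ and $U\setminus\Sigma$ (the free-action direction of Theorem 4), and an equivariant tubular neighbourhood plus homotopy invariance for the second claim. Your extra remarks on deducing Mayer--Vietoris and homotopy invariance for $\widehat{SH}_G^*$ from the long exact sequence of Theorem 3 just make explicit what the paper subsumes under the assertion that $\widehat{SH}_G^*$ is an equivariant cohomology theory.
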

\begin{proof}
The first statement follows from the Mayer-Vietoris sequence for the open cover $U,M \setminus \Sigma$ and the fact that $\widehat{SH}_{G}^{k}$ vanishes for $M \setminus \Sigma, U \setminus \Sigma$ since the action there is free. The second statement follows by choosing $U$ to be an equivariant tubular neighbourhood, so the inclusion of $\Sigma$ in $U$ is a $G$-homotopy equivalence.
\end{proof}

\section{\label{sec:Examples}Applications}

As mentioned in the introduction, one reason for giving geometric constructions for (co)homology theories is  that they sometimes allow computations; for example by giving explicit representatives for (co)homology classes and based on this determining induced maps. The simplest example of this type is the fundamental class of a closed oriented smooth manifold $M$ which is not so easy to construct explicitly as a singular chain (unless one considers the construction of a triangulation an easy task). In stratifold homology this is just the homology class represented by the identity map on $M$. In this spirit we will study in detail the equivariant cohomology of weighted circle actions on spheres. Another example in the same spirit is a geometric description of a product in the cohomology of the loop space $LBG$ of free continuous loops on the classifying space $BG$ of a $G$ defined by Gruher. We will demonstrate the use of our description by a simple example. 

\subsection*{Weighted circle action on a sphere}

~

Consider $S^{2n-1}\subseteq\mathbb{C}^{n}$, for integers $k_{1},k_{2},...,k_{n}$
we define an action of $S^{1}$ by: \[
\alpha\cdot(x_{1},...,x_{n})=(\alpha^{k_{1}}x_{1},...,\alpha^{k_{n}}x_{n}).\]
Denote $K=\Pi k_{i}$ then:

\begin{thm}
$\widehat{SH}_{S^{1}}^{k}(S^{2n-1})=\begin{cases}
\mathbb{Z}/K & k\, even\\
0 & else.\end{cases}$ 

\noindent A generator of $\widehat{SH}_{S^{1}}^{2k}(S^{2n-1})$ is given as follows: If $k\geq 0$  it is given by $[Id\times i_k:(S^{2n-1}\times ES^1_{(k)},\emptyset) \to S^{2n-1}\times ES^1]$ where $ES^1$ is the unit sphere in the Hilbert space, $ES^1_{(k)}$ is the intersection of the sphere with a subspace of complex codimension k, and $i_k$ is the inclusion. If $k<0$ a generator of $\widehat{SH}_{S^{1}}^{2k}(S^{2n-1})$ is given by $[\pi \times Id,\pi:(M_{\pi} \times ES^1, S_f^{-2k-1}\times S^{2n-1}) \to S^{2n-1}\times ES^1]$ where $M_\pi$ is the mapping cone of the projection $\pi:S_f^{-2k-1} \times S^{2n-1}\to S^{2n-1}$ considered as a stratifold with boundary $S_f^{-2k-1} \times S^{2n-1}$, and $S_f^{-2k-1}$ is the sphere with the standard action given by complex multiplication.
\end{thm}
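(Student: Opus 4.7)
My plan is to compute $\widehat{SH}_{S^1}^*(S^{2n-1})$ by running the long exact sequence of Theorem~3 after identifying $SH^*_{S^1}(S^{2n-1})$ and $DSH^*_{S^1}(S^{2n-1})$ with Borel (co)homology via Theorems~1 and~2. The key geometric input is that the Borel construction $E = ES^1 \times_{S^1} S^{2n-1}$ is the unit sphere bundle $S(V)$ of the complex vector bundle $V = L^{k_1}\oplus\cdots\oplus L^{k_n}$ over $BS^1 = \mathbb{CP}^\infty$, with Euler class $e(V) = K\cdot x^n$ for $x\in H^2(BS^1)$ the standard generator. Running the Gysin sequence in cohomology and homology computes $SH^{2\ell}_{S^1}(S^{2n-1}) = \mathbb{Z}$ for $0\le\ell\le n-1$ and $\mathbb{Z}/K$ for $\ell\ge n$ (odd degrees vanish), and $DSH^k_{S^1}(S^{2n-1}) \cong H^{S^1}_{2n-2-k}(S^{2n-1};\mathbb{Z})$ is $\mathbb{Z}$ for even $k\in[0,2n-2]$, is $\mathbb{Z}/K$ for odd $k\le -1$, and vanishes otherwise.

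The crux is to identify the natural transformation $\Theta\colon DSH^{2\ell}\to SH^{2\ell}$ in the overlap range $0\le\ell\le n-1$ (the only degrees where both groups are non-zero) as multiplication by $K$. I would take the generator of $DSH^{2\ell}$ represented by the monomial map $f_\ell\colon S^{2(n-\ell)-1}\to S^{2n-1}$, $(z_1,\dots,z_{n-\ell})\mapsto(z_1^{k_1},\dots,z_{n-\ell}^{k_{n-\ell}},0,\dots,0)/\|\cdot\|$ with the domain carrying the standard (free) $S^1$-action. It is a generator because its image under the Gysin edge homomorphism $H_{2n-2-2\ell}(E) \to H_{2n-2-2\ell}(BS^1)$ is the generator $[\mathbb{CP}^{n-\ell-1}]$. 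Its image $\Theta[f_\ell] = [\bar{f_\ell}\colon S(L^{\oplus(n-\ell)})\to S(V)]$ in the Borel construction factors through the sub-sphere bundle $S(V')$, with $V' = L^{k_1}\oplus\cdots\oplus L^{k_{n-\ell}}$, as a degree-$(k_1\cdots k_{n-\ell})$ branched cover $S(L^{\oplus(n-\ell)})\to S(V')$ followed by the inclusion $S(V')\hookrightarrow S(V)$, whose class is the Euler class $e(V'') = (k_{n-\ell+1}\cdots k_n)\cdot x^\ell$ of the complementary sub-bundle $V'' = L^{k_{n-\ell+1}}\oplus\cdots\oplus L^{k_n}$. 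Multiplying gives $\Theta[f_\ell] = K\cdot x^\ell$, so $\Theta = \cdot K$, and the long exact sequence of Theorem~3 yields $\widehat{SH}^{2k}_{S^1}(S^{2n-1}) = \mathbb{Z}/K$ for every $k\in\mathbb{Z}$ and vanishing in odd degrees.

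For the explicit generators, when $k\ge 0$ the class $[Id\times i_k\colon S^{2n-1}\times ES^1_{(k)}\hookrightarrow S^{2n-1}\times ES^1]$ is the Borel pullback of $x^k\in H^{2k}(BS^1)$ and hence generates $SH^{2k}_{S^1}(S^{2n-1})$; the long exact sequence makes $SH^{2k}\to\widehat{SH}^{2k}$ surjective (since $DSH^{2k+1}=0$ for $k\ge 0$), so its image is a generator of $\mathbb{Z}/K$. When $k<0$ the long exact sequence yields an isomorphism $\partial\colon\widehat{SH}^{2k}\xrightarrow{\cong}DSH^{2k+1}$; the mapping-cone representative from the statement has boundary class $[\pi\colon S_f^{-2k-1}\times S^{2n-1}\to S^{2n-1}]$, and via Theorems~1 and~2 this projection is identified with the fiber fundamental class in $H^{S^1}_{2n-3-2k}(S^{2n-1})$ coming from $H_0(BS^1)$ via the Gysin connecting map, which is a generator of $\mathbb{Z}/K$.

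The main obstacle is the $\Theta = \cdot K$ identification, which hinges on recognising the factorization of $\bar{f_\ell}$ through the linear sub-sphere bundle and computing the Euler class of the complementary sub-bundle; once this is in hand, everything else follows formally from the Gysin computations and manipulation of the long exact sequence of Theorem~3.
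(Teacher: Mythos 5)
Your proposal is correct and its overall architecture coincides with the paper's: compute $SH^*_{S^1}(S^{2n-1})$ and $DSH^*_{S^1}(S^{2n-1})$, show that the only possibly non-zero components of $\Theta$ are multiplication by $K$ between infinite cyclic groups in even degrees $0\le 2\ell<2n$, run the long exact sequence of Theorem 3, and obtain the generators from the surjectivity of $SH^{2k}\to\widehat{SH}^{2k}$ (for $k\ge 0$) and the boundary isomorphism $\widehat{SH}^{2k}\cong DSH^{2k+1}$ (for $k<0$). Where you genuinely diverge is in the crux step $\Theta=\cdot K$. The paper stays inside the stratifold bordism framework: it introduces the equivariant monomial map $g:S_f^{2n-1}\to S^{2n-1}$ from the standard free sphere, forms a commutative square comparing the two spheres, and reduces everything to the composition $DSH^{2k}_{S^1}(S_f^{2n-1})\xrightarrow{g!}DSH^{2k}_{S^1}(S^{2n-1})\to DSH^{2k}_{S^1}(S_f^{2n-1})$, which is computed as a transversal pullback whose (non-equivariant) degree is $K$ because $g$ has degree $K$. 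You instead pass fully to the Borel picture $S(V)\to\mathbb{CP}^\infty$ with $V=L^{k_1}\oplus\dots\oplus L^{k_n}$ and compute the pushforward class of $\bar f_\ell$ by factoring it through the sub-sphere bundle $S(V')$, picking up the fibrewise degree $k_1\cdots k_{n-\ell}$ of the branched cover and the Euler class $e(V'')=(k_{n-\ell+1}\cdots k_n)x^\ell$ of the complementary bundle. Both arguments work; the paper's has the advantage of never leaving the geometric bordism setting (which is the point of the paper), while yours makes the answer $K=\prod k_i$ visibly a product of two contributions and avoids the auxiliary commutative square. One spot in your sketch that deserves a little more care: the assertion that the class of the inclusion $S(V')\hookrightarrow S(V)$ in $H^{2\ell}(S(V))$ equals $\pi^*e(V'')$ cannot be read off merely from restricting to $S(V')$ (the restriction $H^{2\ell}(S(V))\to H^{2\ell}(S(V'))$ need not be injective for larger $\ell$); it should be justified, e.g., by pairing with a section of $S(V)$ over $\mathbb{CP}^{\ell}$ or via the Thom class of the normal bundle $\pi^*V''$. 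It is a standard and true fact, so this is a presentational gap rather than a mathematical one.
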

\begin{proof}
Let $S_{f}^{2n-1}$ be the sphere with the standard (free) action given by complex multiplication.
Look at the map \[
g:S_{f}^{2n-1}\to S^{2n-1}\]
given by 

\[
g(x_{1},\dots,x_{n})=(x_{1}^{k_{1}},\dots,x_{n}^{k_{n}})/\left\Vert (x_{1}^{k_{1}},\dots,x_{n}^{k_{n}})\right\Vert . \]
By comparing the Serre spectral sequence for equivariant cohomology for both spheres, together with the isomorphism $SH_{G}^{k}\cong H_{G}^{k}$,
we deduce that: \[
SH_{S^{1}}^{k}(S^{2n-1})=\left\{ \begin{array}{c}
\mathbb{Z}\,\,\,\,\,\,\,\,\,0\leq k<2n\,\, and\, even\\
\mathbb{Z}/K\,\,\,\,\,\,\,\,\,\,2n\leq k\,\, and\, even\\
0\,\,\,\,\,\,\,\,\,\,\,\,\,\,\,\,\,\,\,\,\,\,\,\,\,\,\,\,\,\,\,\,\,\,\,\,\,\,\,\,\,\,\,\,\,\,\,\,\,\,\, else.\end{array}\right.\]
A similar computation can be done for equivariant homology. Then,
using equivariant Poincar\'e duality we get: \[
DSH_{S^{1}}^{k}(S^{2n-1})\cong H_{2n-2-k}^{S^{1}}(S^{2n-1})=\left\{ \begin{array}{c}
\mathbb{Z}\,\,\,\,\,\,\,\,\,\,\,\,\,0\leq k<2n\,\, and\, even\\
\mathbb{Z}/K\,\,\,\,\,\:\,\,\,\,\,\,\,\,\,\,\,\,\,\, k<0\,\, and\, odd\\
0\,\,\,\,\,\,\,\,\,\,\,\,\,\,\,\,\,\,\,\,\,\,\,\,\,\,\,\,\,\,\,\,\,\,\,\,\,\,\,\,\,\,\,\,\,\,\,\,\,\,\,\,\,\, else.\end{array}\right.\]
The only non vanishing map $DSH_{S^{1}}^{l}(S^{2n-1})\to SH_{S^{1}}^{l}(S^{2n-1})$ can occur when $l$ is even and $0\leq l<2n$. In these cases both groups are infinite cyclic.
Consider the following commutative square, where all groups are infinite
cyclic: 

\[
\begin{array}{ccc}
DSH_{S^{1}}^{2k}(S^{2n-1}) & \to & SH_{S^{1}}^{2k}(S^{2n-1})\\
\downarrow &  & \downarrow\\
DSH_{S^{1}}^{2k}(S_{f}^{2n-1}) & \to & SH_{S^{1}}^{2k}(S_{f}^{2n-1})\end{array}\]
The right vertical map is an isomorphism by the Serre spectral sequence, since $g^*:H^l(S^{2n-1}; \mathbb Z)\to H^l(S_f^{2n-1}; \mathbb Z)$ is an isomorphism for $l<2n-1$. 
The bottom map is an isomorphism since the action is free. It
is enough to compute the left vertical map. Since the (Umkehr) map
$DSH_{S^{1}}^{2k}(S_{f}^{2n-1})\xrightarrow{g!} DSH_{S^{1}}^{2k}(S^{2n-1})$ is
an isomorphism (same argument as for the right vertical map), it is enough to compute the composition 

\[
DSH_{S^{1}}^{2k}(S_{f}^{2n-1})\xrightarrow{g!} DSH_{S^{1}}^{2k}(S^{2n-1})\to DSH_{S^{1}}^{2k}(S_{f}^{2n-1}).
\]

\noindent Take a generator $\alpha$ for $DSH_{S^{1}}^{2k}(S^{2n-1}_f)$, it is
represented by the map: \[
h:S_{f}^{2n-2k-1}\to S^{2n-1}_f\]
given by \[
h:(x_{1},\dots,x_{n-k})=(x_{1},\dots,x_{n-k},0,\cdots,0)\]

\noindent The image of $\alpha$ under this composition is represented by the pullback of the following diagram, denoted here by $\tilde{S} \to S^{2n-1}_f $:

\[
\begin{array}{ccc}
\tilde{S} & \to & S^{2n-1}_f\\
\downarrow &  & \downarrow\\
S_{f}^{2n-2k-1} & \to & S^{2n-1} \end{array}\]

\noindent If we forget the action, then the left vertical map is of degree $K$ since the right vertical map is. This implies that $[\tilde{S} \to S^{2n-1}_f]=K\cdot \alpha$.

When $k>0$ the map $SH_{S^{1}}^{2k}(pt)\to SH_{S^{1}}^{2k}(S^{2n-1})$, induced by the map $S^{2n-1} \to pt$ is surjective. The map $SH_{S^{1}}^{2k}(S^{2n-1}) \to \widehat{SH}_{S^{1}}^{2k}(S^{2n-1})$ is also surjective, so a generator of $\widehat{SH}_{S^{1}}^{2k}(S^{2n-1})$ can be taken to be the image of a generator of $SH_{S^{1}}^{2k}(pt)$, as appears in the statement of the theorem.

When $k < 0$ the map $DSH_{S^{1}}^{2k+1}(pt) \to DSH_{S^{1}}^{2k+1}(S^{2n-1})$ induced by the map $S^{2n-1}\to pt$ takes a generator to a generator, so $[S_f^{-2k-1}\times S^{2n-1}\to S^{2n-1}]$ generates $DSH_{S^{1}}^{2k+1}(S^{2n-1})$. Since $k < 0$  the map $\widehat{SH}_{S^{1}}^{2k}(S^{2n-1})\to DSH_{S^{1}}^{2k+1}(S^{2n-1})$ is an isomorphism. The element in the statement is mapped to $[S_f^{-2k-1}\times S^{2n-1}\to S^{2n-1}]$, hence it generates $\widehat{SH}_{S^{1}}^{2k}(S^{2n-1})$.

\end{proof}

\subsection*{A Product in the cohomology of LBG}

~

Let $BG$ denote the classifying space of $G$ and $LBG$ the free loop space on $BG$, i.e. the space of continuous maps from $S^1$ to $BG$. Gruher and  Westerland \cite{G-W} have defined a product (induced from the co-ring spectrum structure on $LBG^{-ad}$) in the cohomology of $LBG$. In Proposition 8.1 they give a description of this product in terms of the exterior product and the Umkehr map. Since the exterior product and the Umkehr map have a simple description in stratifold cohomology, we can simplify this construction.

We consider the conjugation action of $G$ on itself and denote this $G$-manifold by $G^{ad}$. The starting point of the construction of the product is the fiberwise homotopy equivalence $G^{ad}\times_{G}EG\to LBG$ over $BG$ (see the appendix in \cite{KSS}). Using this we identify  the cohomology groups $H^{n}(LBG)$ with $H^n_G(G^{ad})$, which we identify with  $SH^n_G(G^{ad})$ and construct Gruher's product 
\[
*: H^{n}(LBG)\otimes H^{m}(LBG)\to H^{n+m-dim(G)}(LBG),\]
as a product:
\[
*:SH^{n}_{G}(G^{ad})\otimes SH^{m}_{G}(G^{ad}) \to SH^{n+m-dim(G)}_{G}(G^{ad}).\]

Recall our assumption that the conjugation action is orientation preserving. Then we can apply Poincar\'e duality and construct the product in backwards homology. 

The group multiplication in $G$ is an equivariant map $\mu:G^{ad}\times G^{ad}\to G^{ad}$. Thus it induces a product:
 \[
DSH_{k}^{G}(G^{ad})\otimes DSH_{l}^{G}(G^{ad})\to DSH_{k+l}^G( G^{ad} \times G^{ad})\xrightarrow{\mu_*} DSH_{k+l}^{G}(G^{ad})\]
Applying Poincar\'e duality we get the product:
\[
*:SH^{n}_{G}(G^{ad})\otimes SH^{m}_{G}(G^{ad}) \to SH^{n+m-dim(G)}_{G}(G^{ad})\]

Since in stratifold (co)homology Poincar\'e duality is a tautology we can describe the product explicitly:

\begin{prop} Given
elements 

\[
\alpha=[S\to G^{ad}\times EG]\in DSH_{k}^{G}(G^{ad})\,\,\, and\,\,\,\beta=[S'\to G^{ad}\times EG]\in DSH_{l}^{G}(G^{ad})\]
then $\alpha* \beta$ is represented by the composition:

\[
\alpha* \beta = [S\times S'\to G^{ad}\times EG\times G^{ad}\times EG\xrightarrow{\mu}G^{ad}\times EG\times EG\cong G^{ad}\times EG]\]

\end{prop}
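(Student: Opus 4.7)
The plan is to unravel the definitions of exterior product and of the induced map $\mu_{*}$ in backwards homology, both of which were given geometrically in Section 3. By construction,
\[
\alpha * \beta \;=\; \mu_{*}(\alpha \times \beta),
\]
so I need only verify that this two-step recipe, applied to the given representatives, produces the composition in the statement.

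Starting with $\alpha = [(f,g):S \to G^{ad}\times EG]$ and $\beta = [(f',g'):S' \to G^{ad}\times EG]$, I first form the exterior product $\alpha \times \beta \in DSH_{k+l}^{G}(G^{ad} \times G^{ad})$, which by definition is represented by the Cartesian product
\[
S\times S' \xrightarrow{(f\times f',\,g\times g')} G^{ad}\times G^{ad}\times EG\times EG,
\]
followed by a $G$-equivariant collapse $\varphi : EG\times EG \to EG$ in order to land in $(G^{ad}\times G^{ad})\times EG$, as required by the definition of $DSH_{k+l}^{G}$. Such a $\varphi$ exists and is unique up to equivariant homotopy since both $EG\times EG$ and $EG$ are free contractible $G$-Hilbert manifolds and $EG$ is universal among them; hence the resulting bordism class is independent of the choice. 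This collapse is exactly the identification $\cong$ that appears in the displayed formula of the proposition.

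Next I would apply $\mu_{*}$, which on backwards homology is post-composition with $\mu \times \mathrm{id}_{EG}$. Concatenating the two steps produces the representative
\[
S\times S' \to G^{ad}\times EG\times G^{ad}\times EG \xrightarrow{\mu} G^{ad}\times EG\times EG \cong G^{ad}\times EG,
\]
which matches the proposition verbatim. The one technical point that requires care is that $\varphi$ can be arranged so that the composition remains a regular, oriented, free, singular $G$-Hilbert stratifold of Fredholm index $-(k+l)$; this follows from additivity of the Fredholm index under Cartesian product together with the classifying-space properties of $EG$ already used to set up backwards homology in Section 3. Since Poincar\'e duality in stratifold (co)homology is tautological (Theorem 2), the same formula also describes the cohomology product $*$ after transport along $PD$.
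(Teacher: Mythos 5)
Your proposal is correct and follows exactly the route the paper intends: the paper offers no separate argument for this proposition, treating it as an immediate unwinding of the geometric definitions of the exterior product (Cartesian product) and of $\mu_{*}$ (post-composition) in $DSH_{*}^{G}$, which is precisely what you do. The only point you add beyond the paper is making explicit that the identification $EG\times EG\cong EG$ must be realized by an equivariant diffeomorphism of Hilbert manifolds (not merely a map unique up to homotopy) so that the Fredholm index $-(k+l)$ is preserved -- a worthwhile clarification, and correctly flagged.
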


The following relates this product to the cup product in $SH^*_G(pt)\cong H^*(BG)$:

\begin{Lemma}
Let $pt$  be a one point space and $i:pt\to G$ given by $i(pt)=1$. The Umkehr map  $i_!:SH^*_G(pt)\to SH^{*+dim(G)}_G(G^{ad})$ is injective and $i_!(\alpha \cup \beta)=i_!(\alpha) * i_!(\beta).$
\end{Lemma}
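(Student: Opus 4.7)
My approach is to transport both statements to backward homology via Poincar\'e duality, where the geometry becomes transparent. First I would give a geometric description of the Umkehr map: for $P = pt$ the definitions of $SH^k_G(pt)$ and $DSH^G_{-k}(pt)$ coincide tautologically (both are bordism classes of singular $G$-Hilbert stratifolds $f: S \to EG$ of Fredholm index $-k$). Write $\phi$ for this identification. Via $\phi$ and Poincar\'e duality $PD: SH^{k+\dim G}_G(G^{ad}) \xrightarrow{\cong} DSH^G_{-k}(G^{ad})$, the Umkehr map $i_!$ corresponds to the covariant map $i_*: DSH^G_{-k}(pt) \to DSH^G_{-k}(G^{ad})$ given by composition with $i \times \mathrm{id}$; concretely, $i_!(\alpha)$ is represented by $(1, f): S \to G^{ad} \times EG$ with first component constant at $1$, and the Fredholm-index identity $\mathrm{index}(1, f) = \mathrm{index}(f) - \dim G$ accounts for the degree shift.

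Given this description, injectivity of $i_!$ reduces to injectivity of $i_*$. The equivariant constant map $c: G^{ad} \to pt$ satisfies $c \circ i = \mathrm{id}_{pt}$, so by functoriality of $DSH^G_*$ we have $c_* \circ i_* = \mathrm{id}$; hence $i_*$, and therefore $i_!$, is split injective.

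For the ring-homomorphism property I would compute both sides in $DSH^G_*(G^{ad})$. By the preceding proposition, $PD(i_!(\alpha) * i_!(\beta))$ is represented by
\[
S \times S' \xrightarrow{(1, f) \times (1, f')} G^{ad} \times EG \times G^{ad} \times EG \xrightarrow{\mu \times h} G^{ad} \times EG,
\]
where $f, f'$ are representatives of $\alpha, \beta$ and $h: EG \times EG \to EG$ is any equivariant map. Since $\mu(1, 1) = 1$, this sends $(s, s')$ to $(1, h(f(s), f'(s')))$. On the other hand, the cup product $\alpha \cup \beta$ in $SH^*_G(pt)$ coincides with the exterior product (the diagonal of $pt$ is the identity), represented by $h \circ (f \times f'): S \times S' \to EG$; applying $i_!$ yields the same representative $(1, h \circ (f \times f'))$. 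Any two equivariant maps $EG \times EG \to EG$ are $G$-homotopic since $EG$ is contractible, so the two bordism classes agree.

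The principal technical point will be verifying Step 1: that under the tautological identification $\phi$ and Poincar\'e duality, $i_!$ really corresponds to the naive pushforward $i_*$ in backward homology, and that Poincar\'e duality intertwines the $*$-product on $SH^*_G(G^{ad})$ with the product $\mu_* \circ \times$ on $DSH^G_*(G^{ad})$. Both are matters of careful bookkeeping with Fredholm indices and orientations, but they carry the substance of the proof; once they are in place, everything reduces to the observation $\mu(1, 1) = 1$ together with the $G$-homotopy uniqueness of $EG \times EG \to EG$.
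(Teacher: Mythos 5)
Your proposal is correct and follows essentially the same route as the paper: injectivity comes from the splitting $c_*\circ i_*=\mathrm{id}$ induced by the equivariant map $G^{ad}\to pt$, and multiplicativity is checked in backwards homology via naturality of the exterior product and pushforwards, reducing to $\mu(1,1)=1$. The paper merely states this as a commutative diagram where you unwind explicit representatives, but the underlying argument is identical.
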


\begin{proof}
Injectivity follows from the fact that $G\to pt$ is equivariant. By the naturality of the exterior product and the induced maps in $DSH_*$ the following diagram commutes:

\[
\begin{array}{ccc}
SH^n_G(pt) \otimes SH^m_G(pt) & \to & SH^{n+dim(G)}_G(G^{ad}) \otimes SH^{m+dim(G)}_G(G^{ad}) \\
\downarrow &  & \downarrow\\
SH^{n+m}_G(pt) & \to & SH^{n+m+dim(G)}_G(G^{ad}) \end{array}\]

The left vertical arrow is the cup product in $SH^*_G(pt)$ and the right vertical map is the product $*$. The horizontal maps are $i_!$.

\end{proof}

We demonstrate the use of this description by an example. Take $G=S^3$. The bundle $S^3\xrightarrow{j} (S^3)^{ad}\times_{S^3} ES^3\xrightarrow{\pi} BS^3$ has a section. In this case, it implies that the Serre spectral sequence collapses at the $E_2$ page. We get 
\[
H^k_{S^3}((S^3)^{ad})\cong \left\{ \begin{array}{c}
\mathbb{Z}\,\,\,\,\,\,\,\,\,\,\,\,\,\,\,\,k=0,3 \, mod\, 4\\  
0\,\,\,\,\,\,\,\,\,\,\,\,\,\,\,\,\,\,\,\,\,\,\,\,\,\,\,\,\,\, else.\,\,\,\,\,\,\,\,\,\,\,\,\,\end{array}\right.
\]
 
Choose generators for $H^*_{S^3}((S^3)^{ad})$ in degrees $4k$ and $4k+3$, denoted by $a_{4k}$ and $b_{4k+3}$ respectively.

\begin{prop}  For $G = S^3$ the product $*$ is given (up to sign) by: 
\[
a_{4k} * a_{4l} = 0, a_{4k} * b_{4l+3} = a_{4k+4l}, b_{4k+3} * b_{4l+3} = b_{4k+4l+3}
\]

\end{prop}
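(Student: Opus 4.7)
The plan is to reduce everything to cup product computations in $H^*(BS^3)=\mathbb{Z}[c]$, $|c|=4$, exploiting the preceding Lemma and the explicit geometric formula for $*$ from the previous Proposition. A preliminary degree count kills $a_{4k}*a_{4l}$: the product $*$ has degree $-\dim(S^3)=-3$, and $SH^n_{S^3}((S^3)^{ad})$ vanishes unless $n\equiv 0,3\pmod 4$, so $a_{4k}*a_{4l}$ lies in the zero group. The products $a_{4k}*b_{4l+3}$ and $b_{4k+3}*b_{4l+3}$ land in the infinite cyclic groups generated by $a_{4k+4l}$ and $b_{4k+4l+3}$ respectively, so the task is to check the integer coefficient is $\pm 1$.

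I would fix explicit representatives. Model $ES^3$ as the unit sphere in $\mathbb{H}^{\infty}$ with $S^3$ acting by left quaternionic multiplication, and let $T^k\subset ES^3$ be the intersection with a quaternionic-linear subspace of codimension $k$. The inclusion $T^k\hookrightarrow ES^3$ is a proper equivariant Fredholm map of index $-4k$ representing $c^k\in SH^{4k}_{S^3}(pt)\cong H^{4k}(BS^3)$. Then set $a_{4k}=\pi^*(c^k)$, represented by $G^{ad}\times T^k\to G^{ad}\times ES^3$, $(g,t)\mapsto (g,t)$, and $b_{4l+3}=i_!(c^l)$, represented by $T^l\to G^{ad}\times ES^3$, $t\mapsto (1,t)$. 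That these actually generate uses (i) the section $j$ of $\pi$ splits $\pi^*$, and (ii) the long exact sequence of the equivariant pair $(G^{ad},G^{ad}\setminus\{1\})$; by equivariant excision and the Thom isomorphism for the normal bundle $\mathfrak{g}=T_1 S^3$, one has $H^k_G(G^{ad},G^{ad}\setminus\{1\})\cong H^{k-3}(BS^3)$ with the subsequent map being $i_!$, and combined with the equivariant homotopy equivalence $G^{ad}\setminus\{1\}\simeq S^3/U(1)$ the LES forces $i_!$ to be surjective in the relevant degrees.

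The $b*b$ product now follows directly from the preceding Lemma:
$$b_{4k+3}*b_{4l+3}=i_!(c^k)*i_!(c^l)=i_!(c^k\cup c^l)=i_!(c^{k+l})=b_{4(k+l)+3}.$$
For the $a*b$ product, the explicit product formula of the previous Proposition applied to the chosen representatives produces the equivariant map
$$G^{ad}\times T^k\times T^l\longrightarrow G^{ad}\times ES^3,\qquad (g,t,t')\longmapsto (g,\,m(t,t')),$$
where $m:ES^3\times ES^3\to ES^3$ is the chosen equivariant contraction. The composite $T^k\times T^l\xrightarrow{m} ES^3$ is a standard geometric representative of the cup product $c^k\cup c^l=c^{k+l}$, hence is equivariantly properly bordant to $T^{k+l}\hookrightarrow ES^3$. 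Taking the product of this bordism with $G^{ad}$ yields a bordism of equivariant Hilbert stratifolds from $a_{4k}*b_{4l+3}$ to the representative of $a_{4(k+l)}$.

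The main obstacle is step (ii) above: showing $i_!(c^l)$ is an actual generator rather than a nonzero proper multiple of one, since the preceding Lemma supplies only injectivity of $i_!$. Once this is in place the remaining bordism manipulations are routine, relying on the equivariant transversality and smooth approximation results developed earlier together with Lemma 12.
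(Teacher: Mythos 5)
Your proof is essentially the paper's: the same representatives $a_{4k}=\pi^*(c^k)$ on $G^{ad}\times T^k$ and $b_{4l+3}=i_!(c^l)$ on $T^l\mapsto(1,\cdot)$, the same degree argument for $a*a$, Lemma 20 for $b*b$, and the observation that $\mu(g,1)=g$ reduces $a*b$ to the cup product $c^k\cup c^l$ in $SH^*_{S^3}(pt)$. The one step you add beyond the paper --- checking that $i_!(c^l)$ is an honest generator and not a proper multiple --- is a legitimate concern the paper passes over, but your justification of it contains an error: $G^{ad}\setminus\{1\}$ is \emph{not} equivariantly equivalent to $S^3/U(1)$; it deformation retracts equivariantly onto the fixed point $\{-1\}$, so $H^*_{S^3}(G^{ad}\setminus\{1\})\cong H^*(BS^3)$. (With your identification the long exact sequence would actually be inconsistent, since $H^{4l+2}_{S^3}(S^3/U(1))\cong H^{4l+2}(BU(1))\cong\mathbb{Z}$ would force a short exact sequence $0\to\mathbb{Z}\to\mathbb{Z}\to\mathbb{Z}\to 0$.) With the correct identification your excision/Thom argument does give that $i_!$ is an isomorphism onto $H^{4l+3}$; alternatively, and more simply, the projection formula gives $i_!(c^l)=\pi^*(c^l)\cup i_!(1)$, and since $i_!(1)$ restricts to the generator of $H^3$ of each fiber, Leray--Hirsch (the collapsing spectral sequence already invoked in the paper) shows this is a generator.
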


\begin{proof}  $H^*(BS^3)$ is a polynomial ring with a generator $x_4$ in degree $4$. Represent the additive generator of $H^{4k}(BS^3)$, $(x_4)^k$, by a singular $S^3$-Hilbert stratifold $T_{4k} \to ES^3$ (we can take $ES^3$ to be the unit sphere in the infinite dimensional separable quaternionic Hilbert space with the action of $S^3$ given by quaternionic multiplication, then $T_{4k}$ can be taken to be its intersection with a subspace of (quaternionic) codimension $k$).

The class $a_{4k}=\pi^*((x_4)^k)$, hence is represented by the map $(S^3)^{ad}\times T_{4k}  \to  (S^3)^{ad} \times ES^3$.
$b_{4k+3}=j_!((x_4)^k)$, hence is represented by the map $pt \times T_{4k} \to  (S^3)^{ad} \times ES^3$.
For degree reasons $a_{4k} * a_{4l} = 0$. By  Lemma 20 we get $b_{4k+3} * b_{4l+3} = b_{4k+4l+3}$. 
Since the composition $(S^3)^{ad}\times pt \to (S^3)^{ad}\times (S^3)^{ad} \xrightarrow{\mu} (S^3)^{ad} $ is the identity, we get that $a_{4k} * b_{4l+3}=a_{4k+4l}$ (we ignore signs, since we could choose the generators such that there would not be signs).

\end{proof}

\end{document}